\pgfplotsset{compat=1.18}
\DeclareMathOperator{\Ann}{Ann}
\DeclareMathOperator{\dlog}{dlog}
\renewcommand{\d}{\mathrm{d}}
\newtheorem{theorem}{Theorem}
\numberwithin{theorem}{section}
\newtheorem{proposition}[theorem]{Proposition}
\newtheorem{remark}[theorem]{Remark}
\newtheorem{example}[theorem]{Example}
\theoremstyle{definition}
\newcommand{\RR}{\mathbb{R}}
\newcommand{\CC}{\mathbb{C}}
\newcommand{\ZZ}{\mathbb{Z}}
\newcommand{\mainprop}{
Let $\ell_1,\ldots,\ell_m$ be as in~\eqref{eq:elli}, and $\phi$ the correlator function~\eqref{eq:corr}. Let $H$ be the homogeneity operator~\eqref{eq:homogeneous}, $\{L_i\}$ the operators~\eqref{eq:Li} arising from the individual hyperplanes, and $\{P_j\}$ and $\{Q_k\}$ the operators constructed from circuits and syzygies, respectively, as was explained above. Then the left $D$-ideal generated by them annihilates $\phi$, i.e.,
\begin{align*}
   \langle H,\{L_i\},\{P_j\},\{Q_k\}\rangle \, \subset \, \Ann_{D(s,\nu)}(\phi) \,.
\end{align*}
}
\title{Differential Equations for\\
{ }Moving Hyperplane Arrangements{ }}
\author{Anaëlle Pfister}
\address{%
Max Planck Institute for Mathematics in the Sciences, Leipzig, Germany\\
\email{anaelle.pfister@mis.mpg.de}
}
\author{Anna-Laura Sattelberger}
\address{%
Max Planck Institute for Mathematics in the Sciences, Leipzig, Germany\\
\email{anna-laura.sattelberger@mis.mpg.de}
}
\begin{document}
\maketitle

\begin{abstract}
\noindent We investigate Mellin integrals of products of hyperplanes, raised to an individual power each. We refer to the resulting functions as {\em combinatorial correlators}. We investigate their behavior when moving the hyperplanes individually. To encode these functions as holonomic functions in the constant terms of the hyperplanes, we aim to construct a holonomic annihilating $D$-ideal purely in terms of the hyperplane arrangement. 
\end{abstract}

\section{Introduction}\label{sec:intro}
We fix $m$ linear forms 
$\ell_1(x),\ldots,\ell_m(x)$
in $n$ variables
$x = (x_1,\ldots,x_n)$. They encode a central hyperplane arrangement in~$\RR^n$.  
We introduce shift parameters $c_1,c_2,\ldots,c_m$,
and we consider the $m$ affine hyperplanes $\{ x \in \RR^n : \ell_i(x) = c_i\}$
for $i=1,\ldots,m$. We augment this by the coordinate hyperplanes
$\{x \in \RR^n: x_j = 0 \}$ for $j=1,\ldots,n$.
The complement of $\CC^n$ by this arrangement of $m+n$ hyperplanes is a very affine variety $X$ that depends on the unknowns $c_1,c_2,\ldots,c_m$.

Our object of study is the following generalized Euler integral \cite{AFST22} associated to the $m$ shifted linear forms,
\begin{align}\label{eq:corrintro}
 \phi(c_1,\ldots,c_m) \, =\, 
\int_\Gamma  
\left(\ell_1(x)-c_1\right)^{s_1}\cdots \left(\ell_m(x)-c_m\right)^{s_m} \,
x_1^{\nu_1} \cdots x_n^{\nu_n} \, 
\frac{\d x_1}{x_1} \wedge \cdots \wedge \frac{\d x_n}{x_n} \, ,
\end{align} 
where $\Gamma$ is a twisted $n$-cycle of~$X$, and $s\in (\CC\setminus \{0\})^m$ and $\nu \in (\CC\setminus \{1\} )^n$ can be complex.
This is the Mellin transform of $\prod_{i=1}^m(\ell_i-c_i)^{s_i}$, but considered as a function of $c = (c_1,\ldots,c_m)$. We refer to the function~\eqref{eq:corrintro} as a {\em combinatorial correlator}.  
Our choice of name is a reference to the theory of {cosmological correlators}, and in particular to the recent article~\cite{DEcosmological}, in which the authors study the integral $\phi(c)$ in the special
case when the linear forms $\ell_i$ range over subsums of the coordinates~$x_j$, and $\nu_1 = \cdots = \nu_n = \epsilon$. In a cosmological setup, this function measures quantities such as the strength of correlations in the first light released in the hot big bang. The differential as well as difference equations behind cosmological correlator functions are tackled from an algebraic perspective in~\cite{CosmoDmod}.

We here seek to determine differential equations that annihilate $\phi(c)$
for all twisted cycles~$\Gamma $. These equations
correspond to a left ideal $I\subset D$ in the \mbox{$m$-th} Weyl algebra in the $c$-variables. More precisely, we aim to represent $\phi$ as a holonomic function. It is well-known that~\eqref{eq:corrintro} is the solution to a restricted GKZ system~\cite{GKZ90}; but these are difficult to compute in practice. We here offer a direct, combinatorial approach, employing the hyperplane~arrangement~only. 

Our construction is purely combinatorial and depends on the hyperplane arrangement only---however, not only on the matroid of the arrangement, as shown in \Cref{sec:U23}. Our main result, summarized in \Cref{prop:ann}, is the construction of an annihilating $D$-ideal of the correlator~$\phi$. It reads~as~follows.
\begin{thm*}
Let $\ell_1,\ldots,\ell_m$ be as in~\eqref{eq:elli}, and $\phi$ the correlator function~\eqref{eq:corr}. Let $H$ be the homogeneity operator~\eqref{eq:homogeneous}, $\{L_i\}$ the operators~\eqref{eq:Li} arising from the individual hyperplanes, and $\{P_j\}$ and $\{Q_k\}$ the operators constructed from circuits and syzygies, respectively, as was explained above. Then the left $D$-ideal generated by them annihilates $\phi$, i.e.,$ \langle H,\{L_i\},\{P_j\},\{Q_k\}\rangle \, \subset \, \Ann_{D(s,\nu)}(\phi) \,.$
\end{thm*}
\Cref{sec:examples} showcases that indeed, in several examples, the holonomic rank of our $D$-ideal attains the upper bound for the holonomic rank of the full annihilating $D$-ideal of $\phi$. In particular, it encodes $\phi$ as a holonomic function. Our study also suggests a relation of the singular locus of $I$ to the discriminantal arrangement of the hyperplane arrangement. In \Cref{prop:singlocn2}, we prove that, for line arrangements, the singular locus of our $D$-ideal is contained in the discriminantal arrangement. While working on this article, the work \cite{FevolaHeo24} 
of Fevola and Matsubara-Heo 
on Euler discriminants of complements of hyperplanes appeared. Their results also recover the singularities of generalized~Euler~integrals.

\smallskip 
In short, we give a combinatorial construction of an annihilating $D$-ideal of~$\phi$~\eqref{eq:corrintro}.  
For software, we use the {\tt Dmodules} package~\cite{DmodM2} in {\em Macaulay2}~\cite{M2}, the package {\tt HolonomicFunctions}~\cite{HolFun} in Mathematica, and the $D$-module libraries~\cite{ABLMS} in {\sc Singular:Plural}~\cite{Singular,Plural}.
We provide our code via GitLab at 
\linebreak \href{https://uva-hva.gitlab.host/universeplus/differential-equations-for-moving-hyperplane-arrangements.git}{https://uva-hva.gitlab.host/universeplus}.
We surmise that the methods developed here will ultimately be useful for cosmology~and~particle~physics. 

\bigskip

\noindent {\bf Outline.} \Cref{sec:background} recalls background on the mathematical tools that we 
employ. In \Cref{sec:construction}, we construct an annihilating $D$-ideal of the correlator function~\eqref{eq:corrintro} purely from the hyperplane arrangement. In \Cref{sec:examples}, we showcase our methods with examples. \Cref{sec:outlook} gives an outlook to future~work.

\section{Preliminaries}\label{sec:background}
We here recall mathematical tools needed for our study. They reach from operator algebras through twisted cohomology to discriminantal~arrangements.
\subsection{Operator algebras}
\paragraph{Differential operators}
The operators we seek for are elements of the $m$-th Weyl algebra in the \mbox{$c$-variables}, denoted $D_m$ or just $D$, 
\begin{align*}
    D_m \,=\, \CC [c_1,\ldots,c_m]\langle \partial_{c_1},\ldots,\partial_{c_m} \rangle,
\end{align*}
where $\partial_{c_i}=\frac{\partial}{\partial c_i}$ is the partial derivative with respect to~$c_i$. It is obtained from the free $\CC$-algebra generated by $c_1,\ldots,c_m, \partial_{c_1},\ldots,\partial_{c_m} $, modulo the following relations. All generators are assumed to commute, except $c_i$ and $\partial_{c_i}$: they obey Leibniz' rule, i.e., $\partial_{c_i} c_i-c_i\partial_{c_i} = 1$ for $i=1,\ldots,m$. Systems of linear PDEs are encoded as left ideals $I\subset D_m$ in the Weyl algebra. 

The {\em singular locus} $\operatorname{Sing}(I)\subset \CC^m$ of a $D_m$-ideal $I$  is derived from the initial ideal of $I$ with respect to the weight vector $(0,1)\in \RR^{2m}$. It encodes where holomorphic solutions to the system of PDEs encoded by $I$ might have singularities; we refer to \cite[Definition~1.12]{SatStu19} for the precise construction.
We will also need the {\em rational Weyl algebra}, denoted $R_m=\CC(c_1,\ldots,c_m)\langle \partial_{c_1},\ldots,\partial_{c_m}\rangle$, for instance to define the {\em holonomic rank} of a $D_m$-ideal~$I$, which is the dimension of $R_m/R_mI$ as a $\CC(c_1,\ldots,c_m)$-vector space.
We will denote the action of operators on a function $f(c_1,\ldots,c_m)$ by a bullet; e.g., $\partial_{c_i}\bullet f=\frac{\partial f}{\partial c_i}$. 
The {\em annihilator} of a function $f(c_1,\ldots,c_m)$, denoted $\Ann_D(f)\coloneqq\{ P\in D_m | P\bullet f=0 \}$, is the $D_m$-ideal consisting of \underline{all} $P\in D_m$ that annihilate~$f$. We point out that, in order to encode $f$ as a holonomic function, it is sufficient to construct a subideal $ I\subset \Ann_D(f)$ such that $I$ has finite holonomic rank, and this is what we tackle in this article.
Instead of~$\CC$, we will also use the field $\CC(s,\nu)=\CC(s_1,\ldots,s_m,\nu_1,\ldots,\nu_n)$ for the field of coefficients, and sometimes denote the resulting Weyl algebra by $D(s,\nu)$.

\paragraph{Shift operators} In our construction of annihilating differential operators, we are also going to utilize {shift operators}. Differential operators encode linear PDEs; shift operators encode recurrence relations. Denoting the discrete shift of the variable $\nu_i$ by $\pm 1$ by $\sigma_{\nu_i}^{\pm 1}: \nu_i\mapsto \nu_i\pm 1$, they obey 
$\sigma_{\nu_i}^{\pm 1}\nu_i = \left(\nu_i\pm 1\right)\sigma_{\nu_i}^{\pm 1}$ for $i=1,\ldots,n$.
Such operators are encoded as elements of the {\em shift algebra}, denoted
\begin{align*}
    \mathcal{S}_n \,=\, \CC[\nu_1,\ldots,\nu_n]\langle \sigma_{\nu_i}^{\pm 1},\ldots,\sigma_{\nu_n}^{\pm 1} \rangle \, .
\end{align*}
In our study, we are going to construct recurrence relations for $\phi$, both in the\linebreak$\nu$- and $s$-variables, from which we will derive elements in $\Ann_{D(s,\nu)}(\phi)$.

\subsection{Twisted cohomology}\label{sec:twistedcoh}
Let $f_1,\ldots,f_m \in \CC[x_1^{\pm 1},\ldots,x_n^{\pm 1}]$ be Laurent polynomials, and denote by $f=f_1\cdots f_m$ their product. Its complement $X=(\mathbb{G}_m^n\setminus V(f))$ in the algebraic $n$-torus $\mathbb{G}_m^n=\operatorname{Spec}(\CC[x_1^{\pm 1},\ldots,x_n^{\pm 1}])$  is a very affine variety via the graph embedding. In slight abuse of notation, we denote $\mathbb{G}_m^n$ by its closed points, $(\CC^{\ast})^n$.

We are going to consider the complex of algebraic differential forms on~$X$, and twist the differential by the logarithmic form 
\begin{align*}
\omega \,=\, 
\dlog \big( x_1^{\nu_1}\cdots x_n^{\nu_n} \cdot\prod_{j=1}^m f_j^{s_j}\big) \, ,
\end{align*}
i.e., our differential is
\begin{align*}
    \nabla_\omega \,=\, \d + \big( \sum_{j=1}^m s_j \frac{\d f_j}{f_j} \, + \, \sum_{i=1}^n \nu_i \frac{\d x_i}{x_i} \big) \wedge \  \ ,
\end{align*}
with $(s,\nu)\in \CC^{m+n}$, and $\d$ denotes the total differential. The $k$-th cohomology group of this complex is the {\em $k$-th twisted cohomology group} of $X$ and is denoted by~$H^k(X,\omega)$. It is generated by the forms 
\begin{align}\label{eq:genkforms}
x^af^b \, \frac{\d x_{i_1}}{x_{i_1}} \wedge \cdots \wedge \frac{\d x_{i_l}}{x_{i_l}} \, ,
\end{align}
where $(a,b)\in \ZZ^{n+m}$, and $1\leq l \leq k$. Any $(n-1)$-form $\phi\in \Omega^{n-1}(X)$ gives rise to a shift relation among integrals $\int f^{s+b}x^{\nu+a} \frac{\d x}{x}$ by expressing $\nabla_\omega(\phi)$ in terms of the generators in~\eqref{eq:genkforms}. Relations obtained like this are called ``IBP relations,'' see e.g.~\cite{AFST22} for more details.

\subsection{Relative twisted cohomology}
We return to our setup of $m$ hyperplanes in $n$-space as in the Introduction. Let
\begin{align*}
    X_c  \,=\, \big\{\left(c_1,\ldots,c_m,x_1,\ldots,x_n\right) \, |\  x_1\cdots x_n \cdot \prod_{i=1}^m \left( \ell_i -c_i \right) \neq 0 \big\}  \, \subset \, \CC^m \times \CC^n
\end{align*}
and $\nabla_{\omega} = \d_{x} + \dlog_{x}(x^{\nu}\cdot \prod_{i=1}^m(\ell_i-c_i)^{s_i})$, i.e., 
\begin{align*}
\nabla_\omega \,=\, \d_x + \big( \sum_{j=1}^m s_j \frac{\d_x \ell_j}{\ell_j-c_j} \, + \, \sum_{i=1}^n \nu_i \frac{\d x_i}{x_i} \big) \wedge \ \ .
\end{align*}
As is indicated by the subscript, the differential is taken only w.r.t.\ the $x$-variables and not the $c$-variables. We will need to consider relative differential $k$-forms,
\begin{align*}
\Omega_{X_c/\CC^m}^k \,=\, \bigoplus_{i_1<\cdots<i_k}  \mathcal{O}_{X_c} \d x_{i_1}\wedge \cdots \wedge \d x_{i_k}\, ,
\end{align*}
and will mean its global sections throughout.
The twisted relative cohomology 
\begin{align*}\begin{split}
H^n\left(X_c/\CC^m,\omega\right) \otimes_{\CC} \CC(s,\nu) &\,=\, \Omega^n_{X_c/\CC^m}(s,\nu) / \nabla_\omega(\Omega^{n-1}_{X_c/\CC^m})(s,\nu) 
\\ & \,=\, D_m(s,\nu) \cdot \left[  \frac{\d x_1}{x_1}\wedge \cdots \wedge \frac{\d x_n}{x_n} \right] 
\end{split}\end{align*}
is a holonomic $D_m(s,\nu)$-module, with the action of $\partial_{c_i}$ being
\begin{align*}
    \partial_{c_i}\bullet \left[ f(c,x)\cdot \d {x_1}\wedge \cdots \wedge \d x_n \right] \,=\, \left[ \left( \frac{\partial f}{\partial c_i} - \frac{s_i}{\ell_i-c_i }\cdot f \right) \, \d x_1 \wedge \cdots \wedge \d x_n \right] \,.
\end{align*}
For $c$ generic, the holonomic rank of $H^n(X_c/\CC^m,\omega)$ is $|\chi(X_c)|$, the signed Euler characteristic of~$X_c$; cf.\ for instance \cite[Theorem~1.1]{AFST22} for elaborations. 
Since \begin{align*}
    D_m(s,\nu)/\Ann(\phi) \,\hookrightarrow \, D_m(s,\nu)\cdot \left[ \frac{\d x_1}{x_1}\wedge \cdots \wedge \frac{\d x_n}{x_n}  \right]\, , \ \, 1 \mapsto \left[ \frac{\d x_1}{x_1}\wedge \cdots \wedge \frac{\d x_n}{x_n}  \right]\, ,
\end{align*}
is an embedding of $D$-modules, the number of bounded regions is an upper bound for the holonomic rank of $\Ann_D(\phi)$, the full annihilating $D$-ideal of~$\phi$. We refer to \cite[Section~5]{FevolaHeo24} for more details.

\subsection{Discriminantal arrangements}\label{sec:discrarr}
We here follow~\cite{BayerBrandt,DiscrArr}. 
Let $\mathcal{A}$ be a fixed arrangement of $m$ affine hyperplanes in~$\RR^n$ that are in general position. The set of general position arrangements whose hyperplanes are parallel to those of $\mathcal{A}$, is the complement of a central arrangement, in $\RR^m$. It is the {\em discriminantal arrangement} of~$\mathcal{A}$. As pointed out in \cite[Section 2]{BayerBrandt}, the construction also works for multiarrangements, i.e., hyperplanes are allowed to occur with multiplicity greater than~$1$. For instance, for $n=1$ and $m$ points $V(x-c_i)$ on the line, $\RR$, the discriminantal arrangement is the braid arrangement 
\begin{align*}
V\big( \prod_{1\leq i<j\leq m}\left(c_i-c_j\right)\big) \, \subset \, \RR^m \, .
\end{align*}
Discriminantal arrangements will occur later on in the study of the singular locus of our combinatorially constructed annihilating $D$-ideal of~$\phi$.

\section{Construction of annihilating differential operators}\label{sec:construction}
We now explain how to construct differential operators that annihilate the correlator function. Consider $m$ hyperplanes through the origin in affine $n$-space,  \begin{align}\label{eq:elli}
\ell_i \,=\, a_1^{(i)}x_1+\cdots + a_n^{(i)}x_n\, , \quad  i\, =1,\ldots,m\, .
\end{align}
\noindent The {\em combinatorial correlator} is the function
\begin{align}\label{eq:corr}
 \phi(c) \, =\, 
\int_\Gamma  
\left(\ell_1(x)-c_1\right)^{s_1}\cdots \left(\ell_m(x)-c_m\right)^{s_m} \,
x_1^{\nu_1} \cdots x_n^{\nu_n} \, 
\frac{\d x_1}{x_1} \wedge \cdots \wedge \frac{\d x_n}{x_n} \, ,
\end{align}
in $c=(c_1,\ldots,c_m)$,
where $\Gamma$ is any twisted $n$-cycle. To be precise, 
\begin{align*}
\Gamma \,\in\, H_n \big(\big(\CC^{\ast}\big)^n \,\backslash \, V\big( \prod_{i=1}^m \left(\ell_i-c_i\right)\big),\mathcal{L}_\omega\big) \, ,
\end{align*}
i.e., $\Gamma$ is an element of
the $n$-th homology with coefficients in the local system~$\mathcal{L}_{\omega}$ of flat sections of the connection $\nabla_{-\omega}=\d - \omega \wedge \ $; see~\cite{AFST22} for~details.

Since the correlator function is homogeneous of degree $\sum_{i=1}^n \nu_i +\sum_{j=1}^m s_j$, i.e., $\phi(\lambda c_1,\ldots,\lambda c_m)=\lambda^{s_1+\cdots+s_m+\nu_1+\cdots+\nu_n} \cdot \phi(c_1,\ldots,c_m)$, one derives the following lemma from Euler's homogeneous function theorem.
\begin{lemma}
The combinatorial correlator function~\eqref{eq:corr} is annihilated by
\begin{align}\label{eq:homogeneous}
    H \, \coloneqq \, c_1\partial_{c_1}+\cdots +c_m\partial_{c_m} - \big( \sum_{i=1}^n \nu_i + \sum_{j=1}^m s_j \big) \, \in \, \operatorname{Ann}_{D(s,\nu)}\left( \phi \right) \, ,
\end{align}
to which we are going to refer as the ``homogeneity operator.''
\end{lemma}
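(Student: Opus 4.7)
The plan is to reduce the claim to a single application of Euler's homogeneous function theorem, so the main content is to verify the homogeneity assertion $\phi(\lambda c)=\lambda^{s_1+\cdots+s_m+\nu_1+\cdots+\nu_n}\phi(c)$ for $\lambda$ in a suitable open set (e.g.\ $\lambda>0$, which is enough by analytic continuation in $c$). The natural way to see this is by the change of variables $x=\lambda y$ in the integral defining $\phi(\lambda c)$.

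Under this substitution, each factor of the integrand transforms homogeneously. The linear form gives $\ell_i(x)-\lambda c_i=\ell_i(\lambda y)-\lambda c_i=\lambda(\ell_i(y)-c_i)$, so
\begin{align*}
\prod_{i=1}^m (\ell_i(x)-\lambda c_i)^{s_i} \,=\, \lambda^{s_1+\cdots+s_m}\prod_{i=1}^m(\ell_i(y)-c_i)^{s_i}.
\end{align*}
The monomial factor contributes $x_1^{\nu_1}\cdots x_n^{\nu_n}=\lambda^{\nu_1+\cdots+\nu_n} y_1^{\nu_1}\cdots y_n^{\nu_n}$, while the logarithmic volume form $\tfrac{\d x_1}{x_1}\wedge\cdots\wedge\tfrac{\d x_n}{x_n}$ is scale invariant. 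The multiplicative scaling $y\mapsto \lambda y$ is a homeomorphism of $(\CC^{\ast})^n$ and maps the complement of $V\bigl(\prod_i(\ell_i-c_i)\bigr)$ to the complement of $V\bigl(\prod_i(\ell_i-\lambda c_i)\bigr)$, pulling back the flat section $\mathcal{L}_{\omega(\lambda c)}$ to $\mathcal{L}_{\omega(c)}$ up to the overall scalar $\lambda^{\sum s+\sum \nu}$; hence the twisted cycle $\Gamma$ pulls back to a cycle of the original local system, and the identity $\phi(\lambda c)=\lambda^{\sum s+\sum\nu}\phi(c)$ holds for every class $\Gamma\in H_n\bigl((\CC^*)^n\setminus V(\prod(\ell_i-c_i)),\mathcal{L}_\omega\bigr)$.

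Differentiating this identity in $\lambda$ at $\lambda=1$ yields Euler's relation,
\begin{align*}
\sum_{i=1}^m c_i\,\partial_{c_i}\bullet\phi \,=\, \Bigl(\sum_{i=1}^n \nu_i+\sum_{j=1}^m s_j\Bigr)\,\phi,
\end{align*}
which is exactly $H\bullet\phi=0$. Since $H$ has polynomial coefficients and acts on a function holomorphic in $c$ on the complement of the singular locus, this equality persists on the full domain of $\phi$, so $H\in \Ann_{D(s,\nu)}(\phi)$.

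The proof has no genuine obstacle: the only point demanding a little care is justifying that the cycle $\Gamma$ transports correctly under $x\mapsto \lambda y$, i.e.\ that the scaling induces an isomorphism on twisted homology commuting with the parameter change $c\mapsto \lambda c$. This is immediate for $\lambda>0$, and the resulting polynomial identity in $\lambda$ (degree $s_1+\cdots+s_m+\nu_1+\cdots+\nu_n$) extends analytically, so differentiation at $\lambda=1$ is legitimate.
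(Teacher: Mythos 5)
Your argument is correct and is exactly the paper's approach: establish $\phi(\lambda c)=\lambda^{\sum s_j+\sum\nu_i}\phi(c)$ and invoke Euler's homogeneous function theorem. The paper simply asserts the homogeneity; you supply the change-of-variables $x\mapsto\lambda y$ and the check that the twisted cycle transports correctly, which is a welcome but unremarkable filling-in of detail.
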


The partial derivatives of $\phi$ with respect to the $c$-variables are 
\begin{align*}
\frac{\partial \phi}{\partial {c_i}}(c_1,\ldots,c_m) \,=\, -s_i \, \cdot\int_{\Gamma}\frac{\prod_{j=1}^m \left(\ell_j-c_j\right)^{s_j}}{\ell_i-c_i}x_1^{\nu_1}\cdots x_n^{\nu_n} \frac{\d x_1}{x_1} \wedge \cdots \wedge \frac{\d x_n}{x_n} \, ,
\end{align*}
for $i=1,\ldots,m$. 
We can therefore identify the action of backwards shifts in $s_i$, $\sigma_{s_i}^{-1}:s_i\mapsto s_i-1$, with the action of differential operator $\partial_{c_i}$ on $\phi$ via
\begin{align}\label{eq:shiftsdiffsc}
    \partial_{c_i}\bullet \phi \,=\, -s_i\sigma_{s_i}^{-1}\bullet \phi  \, .
\end{align}
Note that this is not an equality of operators---it holds only when applied to $\phi$. 

Moreover, for $k \leq m$ and $i_1,\ldots,i_k$ distinct, 
\begin{align}
    \frac{\partial^k \phi}{\partial{c_{i_1}}\cdots \partial {c_{i_k}}}(c_1,\ldots,c_m)=(-1)^{k} s_{i_1} \cdots s_{i_k}\cdot \int_{\Gamma}\frac{f^s}{f_{i_1} \cdots f_{i_k}}x_1^{\nu_1}\cdots x_n^{\nu_n} \frac{\d x_1}{x_1} \wedge \cdots \wedge \frac{\d x_n}{x_n}\, ,
\end{align}
so that
\begin{align*}
{\partial_{c_{i_1}}\cdots \partial_{c_{i_k}}} \bullet \phi \,=\,(-1)^k{s_{i_1}\cdots s_{i_k}}\sigma_{s_{i_1}}^{-1}\cdots \sigma_{s_{i_k}}^{-1} \bullet \phi \, .
\end{align*}

For the derivatives with respect to the $x$-variables, one has 
\begin{align*}
\begin{split}
    \partial_{x_j} \bullet \prod_{i=1}^m (\ell_i-c_i)^{s_i} &\,=\, \sum_{i=1}^m s_ia_j^{(i)}(\ell_i-c_i)^{s_i-1}\prod_{k\neq i} (\ell_k-c_k)^{s_k},
\end{split}
\end{align*}
and hence
\begin{align*}
 -\sum_{i=1}^m a_j^{(i)}\partial_{c_i} \bullet   \prod_{i=1}^m (\ell_i-c_i)^{s_i} \,=\, \partial_{x_j} \bullet \prod_{i=1}^m (\ell_i-c_i)^{s_i} \,.
\end{align*}
We exploit this for carrying out an integration by parts:
\begin{align}\label{eq:IBP}
\begin{split}
   & -\sum_{i=1}^m a_j^{(i)} \partial_{c_i} \bullet \phi  \,=\, \int_\Gamma \left( \partial_{x_j} \bullet \prod_{i=1}^m (\ell_i-c_i)^{s_i} \right)  x_1^{\nu_1}\cdots x_n^{\nu_n} \,\frac{\d x_1}{x_1}\wedge \cdots  \wedge \frac{\d x_n}{x_n} \quad  \\
     & \quad \ \ \ \stackrel{\text{IBP}}{=}\, -(\nu_j-1) \int_\Gamma \prod_{i=1}^m (\ell_i-c_i)^{s_i}   x_1^{\nu_1}\cdots x_j^{\nu_j-1}\cdots \, x_n^{\nu_n} \,\frac{\d x_1}{x_1}\wedge \cdots  \wedge \frac{\d x_n}{x_n} \, . 
     \end{split}
\end{align}
For the second equality in~\eqref{eq:IBP}, we use integration by parts. 
Since the integration cycle does not have a boundary, the term $[\prod_{i=1}^m (\ell_i-c_i)^{s_i}   x_1^{\nu_1}\cdots x_j^{\nu_j-1}\cdots \, x_n^{\nu_n}]_{|\partial \Gamma}$ in the IBP formula vanishes.
We hence identify the action of inverse shifts in the $\nu$'s by first-order differential operators in the $c$'s as
\begin{align}\label{eq:shiftnu}
   (\nu_j-1) \sigma_{\nu_j}^{-1}\bullet \phi \,=\, {\sum_{i=1}^m a_j^{(i)} \partial_{c_i}} \bullet \phi  \, .
\end{align}
We point out the recursive nature of this ``replacement rule.'' For the action of~$\nu_j^2$ on $\phi$, for instance, this implies 
\begin{align*}
\sigma_{\nu_j^2}^{-1}\bullet \phi \,=\, \frac{1}{(\nu_j-1)(\nu_j-2)}\cdot \sum_{i,k=1}^m {a_j^{(i)}a_{j}^{(k)}} \partial_{c_i}\partial_{c_k} \bullet \phi \, .
\end{align*}
In the following two subsections, we are going to exploit \eqref{eq:shiftsdiffsc} and \eqref{eq:shiftnu} to construct elements of $\Ann_{D(s,\nu)}(\phi)$ from recurrence relations for $\phi$ in the $s$ and $\nu$'s.

\subsection{Differential operators from individual hyperplanes}\label{sec:diffIBP}
Consider hyperplanes  $\ell_i(x)=a_1^{(i)}x_1+\cdots+a_n^{(i)}x_n$, $i=1,\ldots,m$, and $\phi$ as in~\eqref{eq:corr}. Each of the $\ell_i$'s gives rise to an operator $L_i\in \Ann_D(\phi)$, as we explain now.
Denoting the 
integrand in the correlator~\eqref{eq:corr} shorthand by $\eta=(\prod_{i=1}^m(\ell_i-c_i)^{s_i})x^\nu\frac{\d x}{x}$,
one directly reads that
\begin{align*}
    \int_\Gamma a_1^{(i)}x_1 \eta+\cdots + \int_\Gamma a_n^{(i)}x_n \eta - \int_\Gamma c_i \eta & \ =\, \int_\Gamma \left( \ell_i -c_i \right) \eta \, .
\end{align*}
Hence
 \begin{align*}
\left[\ell_i(\sigma_{\nu_1},\ldots,\sigma_{\nu_n}) -c_i \right]\bullet \phi \,=\, \sigma_{s_i} \bullet  \phi \, .
 \end{align*}
 %
%
Multiplying this identity with $\sigma_{s_i}^{-1}\sigma_{\nu_1}^{-1}\cdots \sigma_{\nu_n}^{-1}$ from the left yields the equality
\begin{align*}
\left[\sigma_{s_i}^{-1}\sigma_{\nu_1}^{-1}\cdots \sigma_{\nu_n}^{-1}\ell_i(\sigma_{\nu_1},\ldots,\sigma_{\nu_n}) -c_i \sigma_{s_i}^{-1}\sigma_{\nu_1}^{-1}\cdots \sigma_{\nu_n}^{-1}\right]\bullet \phi \,=\, \sigma_{\nu_1}^{-1}\cdots \sigma_{\nu_n}^{-1} \bullet  \phi \, .
\end{align*}
Replacing $\sigma_{s_i}^{-1}$ and $\sigma_{\nu_i}^{-1}$ as in \eqref{eq:shiftsdiffsc} and \eqref{eq:shiftnu}, yields $m$ differential operators $L_1,\ldots,L_m$ in the $c$-variables of order at most $n+1$ that annihilate $\phi$, namely
\begin{align}\begin{split}\label{eq:Li}
    L_i \,=\, -\frac{\partial_{c_i}}{s_i}\big(\sum_{j=1}^n \big({\sum_{k=1}^m a_1^{(k)} \partial_{c_k}} \cdot \ldots \cdot  {\sum_{k=1}^m a_{j-1}^{(k)} \partial_{c_k}} \cdot {\sum_{k=1}^m a_{j+1}^{(k)} \partial_{c_k}} \cdot \ldots \cdot  {\sum_{k=1}^m a_n^{(k)} \partial_{c_k}}\big) a_j^{(i)}\big) \\
   + \, c_i\frac{\partial_{c_i}}{s_i}\big({\sum_{k=1}^m a_1^{(k)} \partial_{c_k}} \cdot \ldots \cdot  {\sum_{k=1}^m a_n^{(k)} \partial_{c_k}} \big)
    \,-\, \big({\sum_{k=1}^m a_1^{(k)} \partial_{c_k}} \cdot \ldots \cdot  {\sum_{k=1}^m a_n^{(k)} \partial_{c_k}}\big)\, .
\end{split}\end{align}

\begin{remark}
We point out that IBP relations derived from the twisted differentials of $(n-1)$-forms, as explained in \Cref{sec:twistedcoh}, give rise to the trivial differential operator in $\Ann_D(\phi)$ only, when replacing via \eqref{eq:shiftsdiffsc} and \eqref{eq:shiftnu}.
\end{remark}

We summarize the findings of this subsection in
\begin{lemma}
Denote by $L_i$, $i=1,\ldots,m$, the differential operators~\eqref{eq:Li} derived from the shifted hyperplanes $\{ \ell_i-c_i=0\}$. Then $\langle L_1,\ldots,L_m\rangle \subset \Ann_{D(s,\nu)}(\phi)$.
\end{lemma}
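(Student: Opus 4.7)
The plan is to derive each $L_i$ as the image, under the substitution rules~\eqref{eq:shiftsdiffsc} and~\eqref{eq:shiftnu}, of a simple recurrence among Mellin integrals coming from the linearity of $\ell_i$. I will first produce an annihilator of $\phi$ living in the shift algebra, and then rewrite it as a differential operator in $D(s,\nu)$.

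The first step is to integrate the pointwise identity $(\ell_i(x) - c_i)\,\eta = \sum_{j=1}^n a_j^{(i)}\,x_j\,\eta - c_i\,\eta$ over the twisted cycle $\Gamma$, where $\eta = \prod_k(\ell_k-c_k)^{s_k}\,x^\nu\,\frac{\d x}{x}$ denotes the integrand of~\eqref{eq:corr}. Each term is recognized as a shift of $\phi$: $\int_\Gamma x_j\,\eta = \sigma_{\nu_j}\bullet\phi$ (the exponent of $x_j$ is raised by one) and $\int_\Gamma (\ell_i - c_i)\,\eta = \sigma_{s_i}\bullet\phi$ (the exponent of $\ell_i-c_i$ is raised by one). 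This yields the shift-algebra annihilator $[\ell_i(\sigma_{\nu_1},\ldots,\sigma_{\nu_n}) - c_i - \sigma_{s_i}]\bullet\phi = 0$, which is precisely the identity already displayed in the text above the statement.

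The second step is to convert this shift-algebra identity into an element of $D(s,\nu)$. Multiplying on the left by $\sigma_{s_i}^{-1}\sigma_{\nu_1}^{-1}\cdots\sigma_{\nu_n}^{-1}$ removes all positive shifts and leaves an identity in which only inverse shifts act on $\phi$, which is essential since~\eqref{eq:shiftsdiffsc} and~\eqref{eq:shiftnu} only furnish substitution rules in that direction. I then apply these rules term by term, using $\sigma_{s_i}^{-1}\bullet\phi = -s_i^{-1}\partial_{c_i}\bullet\phi$ and $\sigma_{\nu_j}^{-1}\bullet\phi = (\nu_j - 1)^{-1}\sum_k a_j^{(k)}\partial_{c_k}\bullet\phi$. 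Since shifts in distinct parameters commute, and the first-order operators $\partial_{c_k}$ and the combinations $\sum_k a_l^{(k)}\partial_{c_k}$ also mutually commute, the substitutions can be performed independently. After clearing denominators in $\CC(s,\nu)$, the resulting operator is (up to a unit of $\CC(s,\nu)$) exactly the expression $L_i$ of~\eqref{eq:Li}, and it annihilates $\phi$ by construction. The containment $\langle L_1,\ldots,L_m\rangle\subset\Ann_{D(s,\nu)}(\phi)$ then follows from the fact that the annihilator is a left ideal.

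The main obstacle I anticipate is the bookkeeping of the cascading substitutions: each of~\eqref{eq:shiftsdiffsc} and~\eqref{eq:shiftnu} is an identity only when both sides are applied to $\phi$, not a genuine equality of operators, so composing several inverse shifts requires checking that shifting one parameter does not alter the replacement coefficients produced by the others. This is resolved by the commutativity of the shift algebra together with the observation that $\partial_{c_i}$ commutes with every $\nu_j$ and $s_i$ regarded as scalars of $\CC(s,\nu)$; with this in place, the rest of the argument is a direct, if lengthy, algebraic computation.
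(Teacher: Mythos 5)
Your proposal reproduces the paper's own argument essentially verbatim: you derive the recurrence $[\ell_i(\sigma_{\nu_1},\ldots,\sigma_{\nu_n}) - c_i - \sigma_{s_i}]\bullet\phi = 0$ from linearity, left-multiply by $\sigma_{s_i}^{-1}\sigma_{\nu_1}^{-1}\cdots\sigma_{\nu_n}^{-1}$ to isolate inverse shifts, and then apply the replacement rules~\eqref{eq:shiftsdiffsc} and~\eqref{eq:shiftnu} to land on~\eqref{eq:Li}, exactly as the paper does in \Cref{sec:diffIBP}. The additional remark you make about the substitutions being identities only ``on $\phi$''---and why the cascading replacements nevertheless compose consistently---is a correct and worthwhile clarification of a step the paper leaves implicit.
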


\subsection{Differential operators from circuits and syzygies}\label{sec:diffcircuits}
Let $\ell_1,\ldots,\ell_m$ be the equations of $m$ hyperplanes through the origin in affine $n$-space. We are going to explain two strategies to compute annihilating differential operators from the combinatorics of the arrangement.
 
Let $k\leq m$. For fixed $\{i_1,\ldots,i_k\}\subset [m]$, we are going to denote
\begin{align*}
\partial_{\widehat{i_j}} \,=\, \partial_{c_{i_1}}\cdots \, \partial_{c_{i_j-1}}\cdot \partial_{c_{i_j+1}}\cdots \,\partial_{c_{i_k}} \, ,
\end{align*}
i.e., $\partial_{c_{i_j}}$ is left out.
Let $p_{i_1},\ldots,p_{i_k}, q \in \CC[c_1,\ldots,c_m]$ be polynomials such that  \begin{align*}
p_{i_1}(\ell_{i_1}-c_{i_1})+ \cdots +p_{i_k}(\ell_{i_k}-c_{i_k}) \,=\, q \,.
\end{align*}
Then the differential operator \begin{align}\label{eq:diffpq}
    p_{i_1}s_{i_1}{\partial_{\widehat{i}_1}} + \cdots +   p_{i_k}s_{i_k}{\partial_{\widehat{i}_k}} \,- \, q \, {\partial_{c_{i_1}} \cdots \, \partial_{c_{i_k}}} 
\end{align}
annihilates $\phi$, which one sees as follows. One substitutes $q = \sum_j p_{i_j} (\ell_{i_j} - c_{i_j})$, takes out the common factor $p_{i_j} \partial_{\widehat{i_j}}$,
and replaces multiplication by $(\ell_{i_j} - c_{i_j})$ with the action of~$\sigma_{s_{i_j}}$.

\medskip
We hence need to find possible candidates for $p_{i_1},\ldots,p_{i_k}$ and $q$, for which we present two strategies; one uses circuits, and the other one uses syzygies.
    
\paragraph{Circuits}
Collect the coefficients of the lines $\ell_i=a_1^{(i)}x_1+\cdots +a_n^{(i)}x_n$ as in \eqref{eq:elli}, $i=1,\ldots,m$, 
in the $n\times m$ matrix 
\begin{align}\label{eq:matrixA}
\mathcal{A} \,=\,
    \begin{bmatrix}
a^{(1)}_{1} & a^{(2)}_{1} & & \cdots & &  a^{(m)}_{1}\\
\vdots & \vdots & & & & \vdots \ \\
a^{(1)}_{n} & a^{(2)}_{n} &  & \cdots & & a^{(m)}_{n}\\
    \end{bmatrix} \, .
\end{align} Let $C=\{i_1,\ldots,i_k\}\subset [m]$ be a subset of dependent columns of~$\mathcal{A} $. Its corresponding submatrix of $\mathcal{A} $ is $\mathcal{A} _C \,=\, \left[
    a^{(i_1) }|a^{(i_2)}|\cdots | a^{(i_k)} \right]$.
We collect a basis of the kernel of $\mathcal{A}_C$ as columns of a matrix~$K_C$. 
We are going to denote it shorthand as $K_C=\ker(\mathcal{A} _C)$.
Every column of $K_C$ gives one possible choice for $p_{i_1},\ldots,p_{i_k}$, while $q$ is 
$[c_{i_1} \ c_{i_2} \ \cdots \  c_{i_k}]$
multiplied by this column. We are going to denote the set of resulting order-$k$ differential operators~\eqref{eq:diffpq} 
as~$\{P_j\}$. In the case that $C$ is a circuit, the resulting operator is unique up to a scalar, and we denote it by $P_C$.

Note that, for generic arrangements, it is sufficient to consider relations arising from circuits of~$\mathcal{A}$, since every non-minimal dependent subset of columns of $\mathcal{A} $ contains a circuit. Let $C=\{i_1,\ldots,i_k,i_{k+1}\}$ be a dependent set, such that every subset of size $k$ is a circuit, and denote a generator of the kernel of $\mathcal{A} _{C\setminus \{i_{k+1}\}}$ by~$K_{C\setminus \{i_{k+1}\}}$. Similarly, denote by $K_{C\setminus \{i_{1}\}}$ a generator of the kernel of $\mathcal{A} _{C\setminus \{i_{1}\}}.$ Since $\mathcal{A} _{C\setminus \{i_{k+1}\}}$ and $\mathcal{A} _{C\setminus \{i_{1}\}}$ are circuits, their kernels are one-dimensional. The kernel of $\mathcal{A}_{C}$ is generated by a vector of the form $(K_{C\setminus \{i_{k+1}\}},0)$ and a vector of the form $(0,K_{C\setminus \{i_{1}\}})$ (w.l.o.g., put $0$ in the first entry, otherwise reorder). Since they are linearly independent, they form a basis of the two-dimensional kernel $K_C.$ The operators constructed as in \eqref{eq:diffpq} then left-factor out $\partial_{i_{k+1}}$ and $\partial_{i_1}$, respectively, hence are left multiples of operators arising from circuits of~$\mathcal{A} $.

\paragraph{Syzygies} 
We now consider the case $q=0$ and are going to construct suitable $p_{i_j}$'s via a syzygy computation over the polynomial ring in the $x$- and $c$-variables.
We here start with the case $k=m$, i.e., $\{i_1,\ldots,i_k\}=[m]$.
To the coefficient matrix $\mathcal{A} $ as in~\eqref{eq:matrixA}, we attach the negative of the identity matrix $I_{k}$ below, and then compute syzygies of this matrix with the help of {\em Macaulay2}.
To be precise, we aim for syzygies in the $c$-variables, independent of the $x_i$'s. 
For computations, we set the degree of the $c$-variables to $0$, and that of the $x$-variables to~$1$, and then compute a basis of the degree-$0$ part of the syzygies.
%
A sample code is provided \Cref{example5lines}.
The output of that computation is a matrix $S=(S_1|\cdots| S_r)$ with entries in~$\CC[c_1,\ldots,c_m]$, where each column $S_i$ of $S$ gives a possible choice for $p_{i_1},\ldots,p_{i_k}$.
Then, for $j=1,\ldots,r$, the differential operator 
\begin{align}\label{eq:Qi}
   Q_j \,=\, (S_i)_1 s_{i_1} {\partial_{\widehat{i}_1}} + \cdots +   (S_i)_k s_{i_k} {\partial_{\widehat{i}_k}} 
\end{align}
 yields an operator of order $m-1$ in $\Ann_D(\phi)$. 
One can repeat the same strategy for any $\{i_1,\ldots ,i_k\} \subseteq [m]$ for any $k \leq m$. Note that this yields non-trivial differential operators only for $k$ sufficiently large.

\medskip
We summarize our findings of \Cref{sec:construction} in
\begin{theorem}\label{prop:ann}
{\mainprop}
\end{theorem}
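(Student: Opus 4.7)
The plan is to verify that each stated generator individually annihilates $\phi$; since $\operatorname{Ann}_{D(s,\nu)}(\phi)$ is a left $D$-ideal, the containment of the generated ideal then follows automatically. In effect, the proof consolidates the constructions carried out throughout \Cref{sec:construction}, so the task is to check that nothing was lost when passing from the shift-algebra picture to the purely differential one.

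First I would handle the homogeneity operator $H$. The integrand $\prod_i (\ell_i(x)-c_i)^{s_i}\, x^\nu \,\d x/x$ becomes, under $x\mapsto \lambda x$ and $c\mapsto \lambda c$, multiplied by $\lambda^{\sum s_j+\sum \nu_i}$, and the measure $\d x_1/x_1 \wedge \cdots \wedge \d x_n/x_n$ is scale-invariant; thus $\phi$ is homogeneous of degree $\sum \nu_i+\sum s_j$ in $c$, and Euler's theorem gives $H\bullet \phi=0$. This is precisely the lemma preceding \eqref{eq:homogeneous}.

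Next I would treat each $L_i$ as built in \eqref{eq:Li}. Starting from the trivial identity $\int_\Gamma (\ell_i-c_i)\eta = \sum_j a_j^{(i)}\int_\Gamma x_j \eta - c_i \int_\Gamma \eta$, which is the shift relation $[\ell_i(\sigma_{\nu_1},\ldots,\sigma_{\nu_n})-c_i]\bullet\phi = \sigma_{s_i}\bullet \phi$, I would apply $\sigma_{s_i}^{-1}\sigma_{\nu_1}^{-1}\cdots\sigma_{\nu_n}^{-1}$ from the left and then systematically replace each inverse shift by a differential operator in the $c$-variables via \eqref{eq:shiftsdiffsc} and \eqref{eq:shiftnu}. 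The key subtlety, which I would flag explicitly, is that these replacements are identities only after applying to $\phi$, not in the Weyl algebra; however, the verification that the composition still produces a genuine annihilator is immediate because each individual intermediate equality is an equality of values of operators on $\phi$, and composition of such valid replacements preserves validity (reading right to left).

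Finally I would verify the operators $\{P_j\}$ and $\{Q_k\}$ from \Cref{sec:diffcircuits}. For a relation $\sum_{j} p_{i_j}\,(\ell_{i_j}-c_{i_j})=q$ with $p_{i_j},q\in \CC[c]$ (which, since the $p_{i_j}$ are in $c$'s and $\ell_{i_j}$ in $x$'s, amounts to $(p_{i_1},\ldots,p_{i_k})\in \ker \mathcal{A}_C$), I would directly compute $\bigl[\sum_j p_{i_j} s_{i_j}\partial_{\widehat{i_j}}\pm q\,\partial_{c_{i_1}}\cdots\partial_{c_{i_k}}\bigr]\bullet\phi$ by differentiating under the integral sign, pulling the $c$-polynomials $p_{i_j}(c),q(c)$ inside the integral, and combining fractions over the common denominator $\prod_{l}(\ell_{i_l}-c_{i_l})$; the numerator reduces to $\sum_j p_{i_j}(c)(\ell_{i_j}(x)-c_{i_j})- q(c) = 0$ by the defining relation, yielding $0$. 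The circuit construction is just the specialization where $\mathcal{A}_C$ is a minimal dependent set and $\ker \mathcal{A}_C$ is one-dimensional, and the syzygy construction is the specialization $q=0$; both therefore produce elements of $\operatorname{Ann}_{D(s,\nu)}(\phi)$. The main obstacle is purely bookkeeping—keeping track of signs and of which operators commute with pulling inside the integral—and I would not expect any genuinely new difficulty beyond what has already been treated in the constructions leading up to the theorem.
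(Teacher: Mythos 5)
Your proof is correct and takes essentially the same route as the paper, which establishes the theorem by verifying generator-by-generator in Section~3 that $H$, the $L_i$, and the circuit/syzygy operators annihilate $\phi$ via homogeneity, the shift-to-differential replacements~\eqref{eq:shiftsdiffsc} and \eqref{eq:shiftnu}, and the common-denominator cancellation, respectively. One small caveat: a careful direct computation of $\bigl[\sum_j p_{i_j}s_{i_j}\partial_{\widehat{i_j}} - q\,\partial_{c_{i_1}}\cdots\partial_{c_{i_k}}\bigr]\bullet\phi$ actually produces the numerator $\sum_j p_{i_j}(\ell_{i_j}-c_{i_j}) + q$, which vanishes because the kernel/syzygy construction in fact yields $\sum_j p_{i_j}(\ell_{i_j}-c_{i_j}) = -q$; the sign in the displayed defining relation preceding~\eqref{eq:diffpq} is off by a minus sign, but you inherited it consistently so your argument still closes.
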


We end this section by pointing out a possible connection to reciprocal linear spaces. To an $n \times m$ matrix $\mathcal{A}$, one associates the linear space
$ L_{\mathcal{A}} = \left\{ x^\top \mathcal{A} \, | \, x \in \CC^n \right\}$ in~$\CC^m $. The {\em reciprocal linear space} of $\mathcal{A}$ is the algebraic variety of entry-wise reciprocals of non-zero points of~$L_{\mathcal{A} }$,
\begin{align*}
 \mathcal{R}_{\mathcal{A}} \,=\,  
\overline{ \left\{\left( y_1^{-1},\ldots,y_m^{-1} \right) \, | \, y \in L_{\mathcal{A}} \cap \left( \CC\setminus \{0\} \right)^m \right\} } \,,
\end{align*}
where the Zariski closure in~$\CC^m$ is meant.
A {\em circuit} of $\mathcal{A}$ is a minimally dependent subset of columns of~$\mathcal{A}$. They give rise to a universal Gröbner basis of the defining ideal of~$\mathcal{R}_{\mathcal{A}}$  as follows.
\begin{theorem}[{\cite[Theorem 4]{circuit}}]\label{thm:recproc} Let $C$ be a circuit and collect a basis of its kernel as columns of a matrix~$K_C$. For the $i$-th column $(K_c^i)_{c \in C}$
of $K_C$, let  
\begin{align}\label{eq:groebnerRA}
f^i_C \,=\, \underset{c \,\in\, C}{\sum}K_c^i \underset{c' \in \, C \setminus \{c\}}{\prod}y_{c'} \, .
\end{align}
The set $\{f^i_C \ | \ C \text{ is a circuit of } \mathcal{A}\}$ is a universal Gröbner basis of the ideal of~$\mathcal{R}_{\mathcal{A}}$. 
\end{theorem}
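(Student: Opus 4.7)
The plan is to proceed in two stages. First, show each $f^i_C$ lies in the defining ideal $I := I(\mathcal{R}_{\mathcal{A}})$. Second, show that for every monomial order $\prec$ on $\CC[y_1,\ldots,y_m]$, the initial forms $\operatorname{in}_\prec(f^i_C)$ generate $\operatorname{in}_\prec(I)$; by definition, this makes the $\{f^i_C\}$ a universal Gröbner basis. When $C$ is a circuit, $\ker(\mathcal{A}_C)$ is one-dimensional and the unique (up to scalar) kernel vector has full support on $C$ (a defining property of circuits), so I drop the index $i$ and write $f_C$.

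For the first stage, I parametrize the open dense subset of $\mathcal{R}_{\mathcal{A}}$ by $y_c = \ell_c(x)^{-1}$, where $\ell_c(x) = \sum_{j=1}^n a^{(c)}_j x_j$. Substituting yields
\begin{align*}
f_C\bigl(\ell_1(x)^{-1},\ldots,\ell_m(x)^{-1}\bigr) \,=\, \frac{\sum_{c\in C} K_c\, \ell_c(x)}{\prod_{c \in C} \ell_c(x)} \,=\, \frac{\bigl(\sum_{c \in C} K_c\, a^{(c)}\bigr)\cdot x}{\prod_{c \in C}\ell_c(x)},
\end{align*}
and the numerator vanishes identically because $(K_c)_{c \in C}$ lies in $\ker(\mathcal{A}_C)$. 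Taking Zariski closure gives $f_C \in I$.

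For the second stage, fix $\prec$. Since $f_C$ is multilinear of degree $|C|-1$ with all coefficients nonzero, $\operatorname{in}_\prec(f_C)$ is the squarefree monomial $\prod_{c \in C \setminus \{c^*\}} y_c$, where $c^*$ is the element of $C$ determined by $\prec$. The ideal $J_\prec := \langle \operatorname{in}_\prec(f_C) : C \text{ circuit of } \mathcal{A}\rangle$ is therefore the Stanley-Reisner ideal of the broken-circuit complex $\Delta_\prec$ of the matroid $M(\mathcal{A})$, whose minimal non-faces are precisely the ``broken circuits'' $C \setminus \{c^*\}$. The inclusion $J_\prec \subseteq \operatorname{in}_\prec(I)$ is immediate from the first stage.

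To upgrade this inclusion to an equality, I would compare Hilbert series. Björner's shelling of the broken-circuit complex gives that the Hilbert series of $\CC[y]/J_\prec$ equals the reduced characteristic polynomial of $M(\mathcal{A})$. On the other hand, $\CC[\mathcal{R}_{\mathcal{A}}]$ can be identified with the Orlik--Terao algebra of the arrangement via the parametrization above, whose Hilbert series, by Terao's theorem, is the same reduced characteristic polynomial. Since $\CC[y]/J_\prec \twoheadrightarrow \CC[y]/\operatorname{in}_\prec(I)$ is a surjection of graded rings with equal Hilbert series, it must be an isomorphism, forcing $J_\prec = \operatorname{in}_\prec(I)$. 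The main obstacle is precisely this matroid-theoretic Hilbert-series computation, which relies on nontrivial external input (shellability of broken-circuit complexes and the Orlik--Terao identification); a more self-contained alternative is to verify Buchberger's S-pair criterion directly, reducing every $S(f_C, f_{C'})$ via the circuit-elimination axiom, but the bookkeeping for iterated circuit exchanges is substantially heavier and is what the Hilbert-series route elegantly bypasses.
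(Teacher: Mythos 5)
Your argument is sound, but note first that the paper itself gives no proof of this statement: it is imported verbatim as \cite[Theorem 4]{circuit} (Proudfoot--Speyer), so the only comparison available is with that source. Your two stages are correct: the vanishing of $f_C$ on the dense parametrized locus $y_c=\ell_c(x)^{-1}$ is exactly right (and $I(\mathcal{R}_{\mathcal{A}})$ is homogeneous since $\mathcal{R}_{\mathcal{A}}$ is a cone, which your graded Hilbert-function argument silently needs), and your identification of $\operatorname{in}_\prec(f_C)$ as the broken-circuit monomial for the variable order induced by $\prec$, hence of $J_\prec$ as the Stanley--Reisner ideal of the broken-circuit complex, is correct, as is the standard ``inclusion of homogeneous ideals plus equal Hilbert functions forces equality'' step. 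Two refinements: the Hilbert series of $\CC[y]/J_\prec$ is not ``the reduced characteristic polynomial'' but $\sum_i f_{i-1}\,t^i/(1-t)^i$ with $f_{i-1}$ the unsigned Whitney numbers of the matroid, and for this you only need the face-count statement (Whitney/Rota's NBC theorem), not Björner's shellability, which would instead give Cohen--Macaulayness that you never use. The genuine difference from the cited source is your reliance on Terao's theorem for the Hilbert series of the Orlik--Terao algebra: this is logically legitimate (Terao's result predates and is independent of the Gröbner statement, so there is no circularity), but Proudfoot--Speyer deliberately avoid it --- their argument establishes the degeneration directly and recovers Terao's Hilbert series formula as a corollary --- so your route trades their self-contained induction for an external input of comparable depth, buying a shorter and more transparent proof at the cost of generality of provenance.
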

The degree of the variety $\mathcal{R}_{\mathcal{A}}$ is the $\beta$-invariant of the matroid of~$\mathcal{A}$, and for $\mathcal{A}$ real, it equals the number of bounded regions enclosed by a generic displacement of the hyperplanes, cf.\ Varchenko's theorem~\cite[Theorem 1.2.1]{Varchenko}. We point out the structural resemblance of the syzygy operators $Q_j$~\eqref{eq:Qi} to the elements~\eqref{eq:groebnerRA} of the universal Gröbner basis.
We believe it would be worthwhile to understand this relation better and investigate if there is a relation that is similar to the close interplay of GKZ systems and toric varieties.

\subsection{Upper bound for the singular locus for lines in the plane}
Let $n=2$ and consider $m$ lines $\ell_i=a_ix+b_iy$, $i=1,\ldots,m$ such that $a_ib_j-a_jb_i\neq 0$ for any $i\neq j$. Denote by $I$ the $D_m(s,\nu)$-ideal generated by $H,\{L_i\},\{P_j\},$ and~$\{Q_k\}$. A set-theoretic upper bound of the singular locus of $I$ is given as follows. 
\begin{proposition}\label{prop:singlocn2}
For $m$ lines in the plane as above, the singular locus of the $D$-ideal $I=\langle H,\{L_i\},\{P_j\},\{Q_k\} \rangle$ is contained in the variety defined by the discriminantal arrangement of the line arrangement enhanced by the coordinate~axes.
\end{proposition}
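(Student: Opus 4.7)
The plan is to compute the characteristic variety of $I$ through the principal symbols of its generators and to show that, for $c$ outside the enhanced discriminantal arrangement, the only common zero in the cotangent fiber is $\xi=0$. Recall that $\Sing(I)$ equals the projection to $\CC^m$ of $V(\mathrm{in}_{(0,1)}(I))\setminus\{\xi=0\}$; hence it suffices to prove that, for such a $c$, the principal symbols of $H$, the $L_i$, and the circuit operators $P_{ijk}$ already have no common zero outside the zero section. The syzygy operators $Q_k$ can only cut down $\Sing(I)$ further and may be ignored for this upper bound.

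First, I would read off the principal symbols. Writing $\ell_i = a_i x + b_i y$ and using the weight $(0,1)$, one has
\[
\sigma(H) \,=\, \sum_{i=1}^m c_i\xi_i, \qquad
\sigma(L_i) \,=\, \tfrac{1}{s_i}\,c_i\xi_i\Bigl(\sum_{k=1}^m a_k\xi_k\Bigr)\Bigl(\sum_{k=1}^m b_k\xi_k\Bigr),
\]
and, for every triple $\{i,j,k\}$ (a circuit of $\mathcal{A}$ whenever $m\ge 3$),
\[
\sigma(P_{ijk}) \,=\, D_{ijk}(c)\,\xi_i\xi_j\xi_k, \qquad
D_{ijk}(c) \,=\, \det\!\begin{pmatrix} a_i & b_i & c_i \\ a_j & b_j & c_j \\ a_k & b_k & c_k \end{pmatrix}.
\]
Next, I would identify the relevant hyperplanes of the discriminantal arrangement of $\{\ell_1,\ldots,\ell_m,x,y\}$ sliced at the (unshifted) coordinate axes: the components $c_i=0$ come from the triple $\{\ell_i,x,y\}$ meeting at the origin; the components $a_ic_j-a_jc_i=0$ and $b_ic_j-b_jc_i=0$ correspond to pairs $\{\ell_i,\ell_j\}$ that meet on $\{y=0\}$, respectively $\{x=0\}$; and $D_{ijk}(c)=0$ records concurrence of the three shifted lines $\ell_i=c_i,\ell_j=c_j,\ell_k=c_k$.

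Now fix $c$ outside this arrangement, so that all $c_i$, all $a_ic_j-a_jc_i$, all $b_ic_j-b_jc_i$, and all $D_{ijk}(c)$ are nonzero, and run the following case analysis on $\xi$. From $\sigma(P_{ijk})=0$ and $D_{ijk}(c)\neq 0$, we get $\xi_i\xi_j\xi_k=0$ for every triple, so at most two coordinates of $\xi$ are nonzero. If exactly one $\xi_i\neq 0$, then $\sigma(H)=c_i\xi_i=0$ forces $c_i=0$, contradicting our choice of $c$. If $\xi_i,\xi_j\neq 0$ and all others vanish, then $\sigma(L_i)=0$ with $c_i\xi_i\neq 0$ forces $a_i\xi_i+a_j\xi_j=0$ or $b_i\xi_i+b_j\xi_j=0$; substituting either relation into $\sigma(H)=c_i\xi_i+c_j\xi_j=0$ and using $\xi_i\neq 0$ yields $a_ic_j-a_jc_i=0$ or $b_ic_j-b_jc_i=0$, again excluded. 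Only $\xi=0$ survives, so $c\notin\Sing(I)$.

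The main subtlety I expect to address is the edge case $m=2$, where no circuits exist and there are no nontrivial syzygies, leaving only $\{H,L_1,L_2\}$; I would verify that the two-nonzero-$\xi$ step above uses only $\sigma(H)$ and $\sigma(L_i)$ and therefore still works. A second thing to check carefully is that the degenerate subcases in the case split (for instance $a_j=0$ when dividing by $a_j$) are harmless, since $a_j=0$ together with $a_i\xi_i+a_j\xi_j=0$ and $\xi_i\neq 0$ forces $a_i=a_j=0$, i.e.\ $\ell_i\parallel\ell_j$, which is excluded by our standing general-position assumption $a_ib_j-a_jb_i\neq 0$. The hardest part is really the bookkeeping of which coordinate-axis components of the discriminantal arrangement are produced by the computation; once these are matched to the geometric concurrence conditions as above, the containment is immediate.
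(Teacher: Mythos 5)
Your proof is correct, and it takes a genuinely different route from the paper's. Both arguments start from the same raw data, namely the $(0,1)$-principal symbols $\sigma(H)=\sum_i c_i\xi_i$, $\sigma(L_i)=s_i^{-1}c_i\xi_i\bigl(\sum_k a_k\xi_k\bigr)\bigl(\sum_k b_k\xi_k\bigr)$, and $\sigma(P_{ijk})=D_{ijk}(c)\,\xi_i\xi_j\xi_k$, but the paper then proceeds algebraically: it works in the graded ring $\operatorname{gr}_{(0,1)}(D_m(s,\nu))/J$ with $J$ generated by those initial forms, multiplies $\operatorname{in}_{(0,1)}(L_1)$ by $\partial_{c_1}\cdots\partial_{c_m}$, reduces the result modulo $\operatorname{in}_{(0,1)}(H)$ and the circuit relations (used as substitution rules for the $c_j$), and shows the type-(2) discriminantal factor $a_ic_j-a_jc_i$ splits off, while the type-(1) factors $c_i$ and type-(3) factors $D_{ijk}(c)$ are read off directly from $\operatorname{in}_{(0,1)}(L_i)$ and $\operatorname{in}_{(0,1)}(P_{ijk})$. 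You instead argue pointwise on the characteristic variety: fix $c$ off the enhanced discriminantal arrangement and show, by a clean case split on the support of $\xi$, that every $(c,\xi)$ in $V(\operatorname{in}_{(0,1)}(I))$ has $\xi=0$, so $c\notin\Sing(I)$. Your route is more transparent and less computation-heavy, and it sidesteps the delicate quotient-ring bookkeeping; the paper's route is closer to what a Gröbner-type computation of $\operatorname{in}_{(0,1)}(I)\colon\langle\partial\rangle^\infty$ would actually produce and makes the factorization structure of the initial ideal explicit. You correctly use only $H$, the $L_i$, and the $P_{ijk}$ (the $Q_k$ can only shrink the bound, as the paper also implicitly does), you correctly observe that every triple of columns is a circuit under the standing assumption $a_ib_j-a_jb_i\neq 0$, and your handling of the $m=2$ edge case and of the degenerate coefficient cases ($a_j=0$) is sound.
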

\begin{proof}
Encode the $m$ lines in the matrix
\begin{align*}
\mathcal{A}_m \,=\,
    \begin{bmatrix}
a_{1} & a_{2} & & \cdots & &  a_{m}\\
b_1 & b_2 &  & \cdots & & b_m\\
    \end{bmatrix} \,  .
\end{align*}
The discriminantal arrangement is given by the maximal minors of the matrix \begin{align*}
\left[ \begin{array}{c|c}
\begin{matrix}
\text{Id}_2
\end{matrix} & \begin{matrix} \mathcal{A}_{m} \end{matrix} \\ \hline
\begin{matrix} 0 & 0 \end{matrix} & \begin{matrix} -c_1 & -c_2 & \hdots & -c_m \end{matrix} 
\end{array} \right] \, .
\end{align*}
Hence, the discriminantal arrangement consists of the factors 
\begin{enumerate}[(1)]
\item $c_i\,$ for $1 \leq i \leq m$, 
\item $a_ic_j-a_jc_i\,$ and $\,b_jc_i-b_ic_j\,$ for $1 \leq i < j \leq m$, and 
\item $(a_jb_k-a_kb_j)c_i+(a_kb_i-a_ib_k)c_j+(a_ib_j-a_jb_i)c_k\,$ for $1 \leq i<j<k \leq m.$
\end{enumerate}
They correspond to the minors of submatrices containing (1)~two columns, (2)~one column, and (3)~no column of the identity matrix. We are going to prove that each of them is contained in the saturated ideal $\operatorname{in}_{(0,1)}(I)\colon\langle \partial_{c_1},\ldots,\partial_{c_m}\rangle^\infty.$ Note that the initial ideal of $I$ w.r.t.\ $(0,1)\in \RR^{2m}$ is an ideal in the graded Weyl algebra, the polynomial ring $\operatorname{gr}_{(0,1)}(D_m(s,\nu))=\CC(s,\nu)[c_1,\ldots,c_m][ \partial_{c_1},\ldots,\partial_{c_m}]$.
Denote by $J$ the $ \operatorname{gr}_{(0,1)}(D_m(s,\nu))$-ideal 
generated by the initial forms of our operators, i.e., $J=\langle \operatorname{in}_{(0,1)}(H), \{ \operatorname{in}_{(0,1)}(L_i)\}, \{ \operatorname{in}_{(0,1)}(P_j)\}, \{ \operatorname{in}_{(0,1)}(Q_k)\} \rangle$. Clearly, one has the inclusion $J\subset \operatorname{in}_{(0,1)}(I)$.

For $\{i,j,k\}$ a circuit, the operator $P_{ijk}$ is
\begin{align*}
P_{ijk} & \,=\, ((a_jb_k-a_kb_j)c_i+(a_kb_i-a_ib_k)c_j  +(a_ib_j-a_jb_i)c_k)\partial_{c_i}\partial_{c_j}\partial_{c_k}  
\\
& \quad - (a_jb_k-a_kb_j)s_i\partial_{c_j}\partial_{c_k} 
+(a_kb_i-a_ib_k)s_j\partial_{c_i}\partial_{c_k}+(a_ib_j-a_jb_i)s_k\partial_{c_i}\partial_{c_j} \, , \ 
\end{align*}
and its initial form is 
$$\operatorname{in}_{(0,1)}(P_{ijk}) \,=\, ((a_jb_k-a_kb_j)c_i+(a_kb_i-a_ib_k)c_j+(a_ib_j-a_jb_i)c_k)\partial_{c_i}\partial_{c_j}\partial_{c_k} \,.$$ 
We have $\operatorname{in}_{(0,1)}(H)=c_1 \partial_{c_1}+ \cdots + c_m \partial_{c_m}$, and the initial form of $L_k$ is $$\operatorname{in}_{(0,1)}(L_k) \,=\, \big(\underset{1\leq i < j\leq m}{\sum} (a_ib_j+a_jb_i)c_k\partial_{c_i}\partial_{c_j}+ \underset{1\leq i\leq m}{\sum}a_ib_ic_k \partial_{c_i}^2\big)\partial_{c_k}\, .$$ 
In particular, we can directly see from the $L_i$'s and $P_j$'s that the minors of the first and third type, respectively, are contained in the singular locus. For factors of the second type, we will, w.l.o.g., consider $a_1c_2-a_2c_1$. We will show that is equivalent to $0$ in the quotient ring $\operatorname{gr}_{(0,1)}(D_m(s,\nu))/J,$ and hence contained in~$J$. Let us denote by $\partial$ the product $\partial_{c_1} \cdots \, \partial_{c_m}.$ 
In the quotient ring,
\begin{align*}
0 &\,\equiv\,\operatorname{in}(L_1) \,\equiv\, \operatorname{in}(L_1)\partial\,=\,  \big(\underset{1\leq i < j\leq m}{\sum} (a_ib_j+a_jb_i)c_1\partial_{c_i}\partial_{c_j}+ \underset{1\leq i\leq m}{\sum}a_ib_ic_1 \partial_{c_i}^2\big)\partial_{c_1} \partial\\
& \,=\, \big(\underset{1< i < j\leq m}{\sum} (a_ib_j+a_jb_i)c_1\partial_{c_i}\partial_{c_j}\partial_{c_1}+ \underset{1<j\leq m}{\sum} a_1b_jc_1\partial_{c_1}^2\partial_{c_j}\\
& \qquad + \underset{1<j \leq m}{\sum} a_jb_1c_1\partial_{c_1}^2\partial_{c_j}+
\underset{1< j\leq m}{\sum}a_jb_jc_1 \partial_{c_1} \partial_{c_j}^2+ a_1b_1c_1 \partial_{c_1}^3\big)\partial \,.
\end{align*} 
Using $\operatorname{in}_{(0,1)}(H)$, one obtains $a_1b_1c_1 \partial_{c_1}^3\equiv -\underset{1< j\leq m}{\sum}a_1b_1c_j \partial_{c_1}^2 \partial_{c_j},$ and similarly, we have $\underset{1<j\leq m}{\sum} a_1b_jc_1\partial_{c_1}^2\partial_{c_j}\equiv - \underset{1<j\leq m}{\sum} \ \underset{i \ne j}{\sum} a_1b_jc_i\partial_{c_1}\partial_{c_i}\partial_{c_j}-\underset{1<j\leq m}{\sum} a_1b_jc_j\partial_{c_1}\partial_{c_j}^2$.  
So, 
\begin{align*}
\operatorname{in}_{(0,1)}(L_k)&  \, \equiv \, (\underset{1< i < j\leq m}{\sum} (a_ib_j+a_jb_i)c_1\partial_{c_1}\partial_{c_i}\partial_{c_j}- \underset{1<j\leq m}{\sum} \ \underset{i \ne j}{\sum} a_1b_jc_i\partial_{c_1}\partial_{c_i}\partial_{c_j} + \\
& \qquad +\underset{1<j \leq m}{\sum} a_jb_1c_1\partial_{c_1}^2\partial_{c_j} -\underset{1<j\leq m}{\sum} a_1b_jc_j\partial_{c_1}\partial_{c_j}^2 +\\
& \qquad +\underset{1< j\leq m}{\sum}a_jb_jc_1 \partial_{c_1} \partial_{c_j}^2 -\underset{1< j\leq m}{\sum}a_1b_1c_j \partial_{c_1}^2 \partial_{c_j}) \partial \, .
\end{align*} 
Using the circuit operators, we can write $c_j$ in terms of $c_1$ and $c_i$ for any $i\geq 2$: $$c_j\,\equiv\, \frac{-(a_ib_j-a_jb_i)c_1-(b_1a_j-b_ja_1)c_i}{a_1b_i-a_ib_1} \, .$$
Substituting via that, one can show each sum term in $\operatorname{in}_{(0,1)}(L_k)$ factors out the term $(a_2c_1-a_1c_2)$, and that after factoring out, the remaining term is in the  $\CC(s,\nu)[\partial_{c_1},\ldots,\partial_{c_m}]$-ideal generated by $\partial_{c_1},\ldots,\partial_{c_m}$---independent of the $c$'s.
The statement now follows from the definition of the singular locus.
\end{proof}

\section{Examples}\label{sec:examples}
We here showcase our methods for point and line arrangements, including the two-site chain modeling a single exchange process in cosmology~\cite{DEcosmological}, and a hyperplane arrangement in $\CC^3$. We address bottlenecks of our construction so far, and make the dependency of our $D$-ideal on the hyperplane arrangement manifest. Unless stated otherwise, our computations in this section were carried in {\sc Singular}, with $\CC(s,\nu)$ as the field of coefficients, so that the results hold true for generic $(s,\nu)$.
We begin with points on a line.

\subsection{Points on the line}
Let $\mathcal{A}_{2}=\begin{bmatrix}
        1 & 2
\end{bmatrix}$ encode two copies of the origin on the complex line, so that we consider $\{x=c_1\}$ and $\{2x=c_2\}$. 
There is one circuit operator 
\begin{align*}
    P & \,=\, (-2c_1+c_2)\partial_1 \partial_2-s_2 \partial_1+2s_1 \partial_2\, ,
\end{align*}
the homogeneity operator $H$, and two operators constructed from the points:
\begin{align*}
    L_1 & \,=\, -\frac{1}{s_1(\nu_1-1)}c_1 \partial_{c_1}^2-\frac{2}{s_1(\nu_1-1)}c_1\partial_{c_1} \partial_{c_2}+\frac{s_1+\nu_1-1}{s_1(\nu_1-1)}\partial_{c_1}+\frac{2}{\nu_1-1} \partial_{c_2} \, , \\
    L_2 & \,=\, -\frac{1}{s_2(\nu_1-1)}c_2\partial_{c_1} \partial_{c_2}-\frac{2}{s_2(\nu_1-1)}c_2\partial_{c_2}^2+\frac{1}{\nu_1-1}\partial_{c_1}+\frac{2s_2+2\nu_1-2}{s_2(\nu_1-1)} \partial_{c_2} \, . 
\end{align*}
All together, these operators generate a holonomic $D$-ideal of holonomic rank~$2$ whose singular locus is $$V(c_1c_2(2c_1-c_2)) \, .$$

Let now $\mathcal{A}_{3}=\begin{bmatrix}
        1 & 2 &3
\end{bmatrix}$ be three points in $\mathbb{C}$.
There is one syzygy operator, five from the circuits, the homogeneity operator, and three corresponding to the points. The holonomic $D$-ideal generated by these operators has rank~$3$. Its singular locus is $$V(c_1c_2c_3(3c_2-2c_3)(2c_1-c_2)(3c_1-c_3))\, .$$

In general, let us consider $m$ points on a line, so that $\ell_i=a_ix$, $i=1,\ldots,m$.
The corresponding coefficient matrix then is $\mathcal{A}_m=[a_1 \ \cdots \ a_m].$ 

For $m>2,$ the circuit operators are 
\begin{align*}
P_{ij}  & \,=\, \, (s_ja_i \partial_{c_i}-s_ia_j \partial_{c_j})-(a_ic_j-a_jc_i)\partial_{c_i}\partial_{c_j},
\end{align*} 
where $i,j=1,\ldots,m$. For $m>3,$ the syzygy operators are
{\small
\begin{align*}
Q_{ijk} \,=\, \, (a_k^2c_j-a_ja_kc_k)s_i\partial_{c_j}\partial_{c_k}  +(-a_k^2c_i+a_ia_kc_k)s_j\partial_{c_i}\partial_{c_k} +(a_ja_kc_i-a_ia_kc_j)s_k \partial_{c_i}\partial_{c_j} \, ,
\end{align*} }

\noindent where $i,j,k=1,\ldots,m$. 
Finally, we have the following operators coming from the points:
\begin{align*} 
L_{i}  & \,=\, \, -\frac{a_i}{s_i} \partial_{c_i}+ \frac{c_i}{s_i(\nu_1-1)} \partial_{c_i}(a_1\partial_{c_1}+\cdots +a_m\partial_{c_m})-\frac{a_1}{\nu_1-1}\partial_{c_1}-\cdots-\frac{a_m}{\nu_1-1}\partial_{c_m} \, .
\end{align*} 
\begin{lemma}
Denote by $I$ the $D_m$-ideal generated by the $P_{ij}$'s,  $Q_{ijk}$'s, $L_i$'s, and the homogeneity operator $H$ from~\eqref{eq:homogeneous}. Then $\operatorname{rank}(I)\leq m$.
\end{lemma}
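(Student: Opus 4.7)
The plan is to show that, working in the rational Weyl algebra $R_m = \mathbb{C}(s,\nu)(c_1,\ldots,c_m)\langle \partial_{c_1},\ldots,\partial_{c_m}\rangle$, the left module $R_m/R_m I$ is spanned as a $\mathbb{C}(s,\nu)(c)$-vector space by at most $m$ elements; this gives $\operatorname{rank}(I) \leq m$ by definition of the holonomic rank.

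First, I would exploit the circuit operators to rewrite every mixed second derivative modulo $I$: the identity $P_{ij} \equiv 0$ in $R_m/R_m I$ is solvable for $\partial_{c_i}\partial_{c_j}$ because the coefficient $(a_i c_j - a_j c_i)$ is a unit in $\mathbb{C}(s,\nu)(c)$, yielding
\[
\partial_{c_i}\partial_{c_j} \, \equiv \, \frac{s_j a_i \partial_{c_i} - s_i a_j \partial_{c_j}}{a_i c_j - a_j c_i} \, .
\]
Similarly, rearranging $L_i \equiv 0$ and substituting the mixed second derivatives appearing in $\partial_{c_i}\sum_{k} a_k \partial_{c_k} = a_i \partial_{c_i}^2 + \sum_{k \neq i} a_k \partial_{c_i}\partial_{c_k}$ via the $P_{ik}$-reductions, one solves for $\partial_{c_i}^2$ and writes it as a $\mathbb{C}(s,\nu)(c)$-linear combination of $\partial_{c_1},\ldots,\partial_{c_m}$ modulo~$I$.

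The next step is an induction on the total order $|\alpha|$ of a monomial $\partial^\alpha$, proving that every such monomial with $|\alpha| \geq 2$ is equivalent modulo $R_m I$ to a $\mathbb{C}(s,\nu)(c)$-linear combination of $\{1,\partial_{c_1},\ldots,\partial_{c_m}\}$. The inductive step isolates one consecutive pair of derivatives in $\partial^\alpha$, replaces it by the first-order expression provided by $P_{ij}$ or by the derived reduction of $\partial_{c_i}^2$, and then commutes the remaining derivatives past the rational coefficients via Leibniz' rule; the resulting terms are either of strictly lower total order in $\partial$ or again contain a reducible pair of derivatives, so the induction closes. This shows that $R_m / R_m I$ is spanned by $\{1, \partial_{c_1}, \ldots, \partial_{c_m}\}$, a set of $m+1$ elements.

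Finally, the homogeneity operator $H$ yields the relation $\sum_i c_i \partial_{c_i} \equiv \nu_1 + s_1 + \cdots + s_m \pmod{R_m I}$, and solving for $\partial_{c_1}$ (using that $c_1$ is invertible in $\mathbb{C}(s,\nu)(c)$) eliminates $\partial_{c_1}$ from the spanning set. Hence $\{1, \partial_{c_2}, \ldots, \partial_{c_m}\}$ spans $R_m / R_m I$, proving the claimed bound. The main technical obstacle I anticipate is a clean execution of the degree induction: one must track how Leibniz' rule generates derivatives of the rational coefficients and verify that these do not re-introduce higher-order terms. This is morally a Gröbner basis argument in $R_m$, and could alternatively be formalized by choosing a term order (e.g.\ lex or degrevlex in the $\partial$'s) under which the normal forms of $R_m/R_m I$ are precisely $\{1, \partial_{c_2}, \ldots, \partial_{c_m}\}$.
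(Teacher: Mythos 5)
Your proposal is correct and follows essentially the same route as the paper: reduce mixed products $\partial_{c_i}\partial_{c_j}$ via the circuit operators $P_{ij}$, reduce the pure squares $\partial_{c_i}^2$ via the $L_i$ (after feeding in the circuit reductions for the mixed terms appearing there), and eliminate one first-order variable via the homogeneity operator $H$, leaving an $m$-element spanning set of $R_m/R_mI$. The Leibniz concern you flag is harmless, since commuting $\partial_{c_k}$ past a rational coefficient only produces a zeroth-order term and never raises the $\partial$-degree, so the degree induction closes as you expect; the paper leaves this closure step implicit.
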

\begin{proof}
We aim to determine the dimension of $R_m/R_mI$ as a $\CC(c)$-vector space. First, we use the operator $H$ to write $[\partial_{c_m}]\in R_m/R_mI$ as a $\CC(c)$-linear combination of $\partial_{c_1},\ldots,\partial_{c_{m-1}}$, and $1$. Secondly, for $i \ne j<m$, we use the circuit operators to write $\partial_{c_i}\partial_{c_j}$ as a linear combination of $\partial_{c_i}$ and $\partial_{c_j}.$ Finally, for $i<m$, we exploit the 
operators $L_1,\ldots,L_m$ to write $\partial_{c_i}^2$ as a linear combination of $1,\partial_{c_1},\ldots,\partial_{c_{m}}$ and $\partial_{c_i}\partial_{c_j}$ for every pair $i \ne j \leq m$. Hence, $1, \partial_{c_1},\ldots,\partial_{c_{m-1}}$ also generate every $\partial_{c_i}^2$. 
\end{proof}

The factors of the discriminantal arrangement are $(a_jc_i-a_ic_j)_{i<j}$ and $c_i$, since we extend the arrangement by the coordinate hyperplanes (here, the origin). We have the following relation to the singular locus of our $D$-ideal.
\begin{lemma}
The singular locus of the $D_m$-ideal $I$ is contained in the discriminantal arrangement of $\mathcal{A} $ enhanced by the origin. 
\end{lemma}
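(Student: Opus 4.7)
The plan is to mimic the structure of the proof of \Cref{prop:singlocn2}, but to exploit that in the $n=1$ setting the combinatorics collapses drastically: circuits arise from pairs of columns, and neither the syzygy operators $Q_{ijk}$ nor the $L_i$ need to be invoked. Following the definition of the singular locus recalled in \Cref{sec:background}, it suffices to show that at every $c^*\in\CC^m$ outside the set $V\bigl(c_1\cdots c_m\prod_{i<j}(a_jc_i-a_ic_j)\bigr)$, the fiber of the characteristic variety $V\bigl(\operatorname{in}_{(0,\mathbf{1})}(I)\bigr)$ above $c^*$ lies in the zero section $\{\partial_{c_1}=\cdots=\partial_{c_m}=0\}$.

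The only initial forms I would use are
\begin{align*}
\operatorname{in}_{(0,\mathbf{1})}(H)\,=\,c_1\partial_{c_1}+\cdots+c_m\partial_{c_m} \quad\text{and}\quad \operatorname{in}_{(0,\mathbf{1})}(P_{ij})\,=\,-(a_ic_j-a_jc_i)\partial_{c_i}\partial_{c_j},\ i<j.
\end{align*}
Fix $c^*$ with $c_k^*\neq 0$ for every $k$ and $a_ic_j^*-a_jc_i^*\neq 0$ for every pair $i<j$. The specialised circuit equations become $\partial_{c_i}\partial_{c_j}=0$ for every $i<j$, so at most one symbol $\partial_{c_k}$ is nonzero. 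Plugging this into $\operatorname{in}_{(0,\mathbf{1})}(H)(c^*,\cdot)=0$ collapses the sum to $c_k^*\partial_{c_k}=0$, and since $c_k^*\neq 0$---this is exactly where the enhancement of the arrangement by the origin enters---we conclude $\partial_{c_k}=0$.

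Hence the fiber over any such generic $c^*$ reduces to the zero section, so $c^*\notin\operatorname{Sing}(I)$. Taking the contrapositive gives the claimed inclusion $\operatorname{Sing}(I)\subseteq V\bigl(c_1\cdots c_m\prod_{i<j}(a_jc_i-a_ic_j)\bigr)$. I do not expect a genuine obstacle; the argument is essentially the geometric shadow of the algebraic manipulations in \Cref{prop:singlocn2}, streamlined to the extent that neither the $L_i$ nor the syzygy operators $Q_{ijk}$ are required, because $H$ together with the $\binom{m}{2}$ circuit operators already cuts the characteristic fiber above a generic $c^*$ down to $\{0\}$.
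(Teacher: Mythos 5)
Your argument is correct and the conclusion matches the paper's, but you close the proof with a different operator than the paper does. The paper's proof uses $\operatorname{in}_{(0,\mathbf{1})}(P_{ij})=-(a_ic_j-a_jc_i)\partial_{c_i}\partial_{c_j}$ together with $\operatorname{in}_{(0,\mathbf{1})}(L_i)\propto c_i\partial_{c_i}(a_1\partial_{c_1}+\cdots+a_m\partial_{c_m})$: once the circuit initial forms force at most one nonzero symbol, say $\partial_{c_k}$, the $L_k$ initial form at a generic $c^*$ reduces to $c_k^*a_k\partial_{c_k}^2$, which kills $\partial_{c_k}$. You instead replace the $L_i$'s by the homogeneity operator $H$, whose initial form $\sum c_i\partial_{c_i}$ collapses to $c_k^*\partial_{c_k}$ after the circuit equations, giving the same conclusion. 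Both routes are valid; yours is slightly more economical in that it needs only $H$ and the $P_{ij}$, whereas the paper's route implicitly uses $a_k\neq 0$ (which is of course harmless since each $\ell_k=a_kx$ must be a genuine point). Your geometric restatement in terms of the fiber of the characteristic variety over a generic $c^*$ is a faithful translation of the ideal-theoretic ``eliminate the $\partial_{c_i}$'s'' step in the paper, since the discriminantal arrangement is Zariski-closed, so a set-theoretic containment of the nontrivial-fiber locus suffices for the claimed inclusion of $\operatorname{Sing}(I)$.
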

\begin{proof}
By taking the initial form of the operators $P_{ij}$ and $L_i$ w.r.t.\ the weight vector $(0,1)\in \RR^{2m}$, respectively, one reads that $(a_ic_j-a_jc_i)\partial_{c_i}\partial_{c_j} \in \operatorname{in}_{(0,1)}(I)$ and $\frac{c_i}{s_i(\nu_1-1)} \partial_{c_i}(a_1\partial_{c_1}+\cdots +a_m\partial_{c_m}) \in \operatorname{in}_{(0,1)}(I)$. By eliminating the $\partial_{c_i}$'s, one reads that the singular locus of $I$ is contained in
$V\big(\prod_{i<j}\left( a_ic_j-a_jc_i\right)\cdot \prod_{i=1}^m c_i \big)$.
\end{proof}

\subsection{Moving two lines in the plane}
We here consider the lines $\ell_1(x,y)=3x+5y$ and $\ell_2(x,y)=7x-3y$ in the plane. 
We do not have any operators arising from syzygies or circuits here. To the two lines, we associate the shift operators
\begin{align*}
3\sigma_{\nu_2}^{-1}\sigma_{s_1}^{-1}+5\sigma_{\nu_1}^{-1}\sigma_{s_1}^{-1}-c_1\sigma_{\nu_1}^{-1}\sigma_{\nu_2}^{-1}\sigma_{s_1}^{-1}-\sigma_{\nu_1}^{-1}\sigma_{\nu_2}^{-1} \, ,\\
7\sigma_{\nu_2}^{-1}\sigma_{s_2}^{-1}-3\sigma_{\nu_1}^{-1}\sigma_{s_2}^{-1}-c_2\sigma_{\nu_1}^{-1}\sigma_{\nu_2}^{-1}\sigma_{s_2}^{-1}-\sigma_{\nu_1}^{-1}\sigma_{\nu_2}^{-1} \, ,
\end{align*}
from which we deduce the differential operators $L_1,L_2$ via \eqref{eq:shiftsdiffsc} and \eqref{eq:shiftnu}. For generic $s_1,s_2,\nu_1,\nu_2$, they generate a holonomic $D$-ideal of holonomic rank $7$. Extending the $D$-ideal by $H$ from~\eqref{eq:homogeneous} yields holonomic rank $3$, the number of bounded chambers. Its singular locus is $V(c_1c_2(3c_1 + 5c_2)(7c_1 - 3c_2))$, which coincides precisely with the discriminantal arrangement.

\subsection{Lines parallel to the coordinate axes}\label{ex:linesparall}
Consider the two lines $\ell_1=x$ and $\ell_2=y$ in the plane. Shifted by $c_i$ each, they enclose one bounded region with the coordinate axes---a quadrilateral. Again, we do not have any non-trivial syzygies or circuits here, so that our only operators arise from the lines themselves. They are
\begin{align*}
   L_1  \, = \, -\frac{c_1}{s_1(\nu_1-1)}\partial_{c_1}^2+\frac{s_1+\nu_1-1}{s_1(\nu_1-1)}\partial_{c_1} \, , \ \ 
L_2  \, = \, -\frac{c_2}{s_2(\nu_2-1)}\partial_{c_2}^2+\frac{s_2+\nu_2-1}{s_2(\nu_2-1)}\partial_{c_2} \, ,
\end{align*}
and together, they generate a holonomic $D_2$-ideal of holonomic rank~$4$. Extending the $D$-ideal by $H$~\eqref{eq:homogeneous} yields holonomic rank~$1$.
The singular locus of $I=\langle H,L_1,L_2\rangle$ is the union of the coordinate axes, which coincides with the discriminantal arrangement.
Its associated $D$-module $D(s,\nu)/I$ recovers the direct image $D$-module of $D(s,\nu)/J$ under the projection 
to the $c$-coordinates, where $J$ denotes the $D$-ideal generated by the four operators
\begin{align*}
& (c_1-x)x\partial_x+xs_1+(x-c_1)(\nu_1-1) \, , \ \
(c_2-y)y\partial_y+ys_2+(y-c_2)(\nu_2-1)\, , \\
& \qquad \qquad \qquad \quad \ \ \,(c_1-x)\partial_{c_1}-s_1 \, , \ \   (c_2 - y)\partial_{c_2} - s_2 \, , 
\end{align*}
which annihilate the integrand of the correlator. For the definition of direct images of $D$-modules, see~\cite[Section~1.5]{HTT08}.

\subsection{Moving three lines in the plane}\label{sec:threelines}
Consider the lines encoded by the columns of $\mathcal{A}_3=\left[\begin{smallmatrix}
3 & 7 & 1 \\
5 & -3 & -2 \\
    \end{smallmatrix} \right]$.
%
We obtain one circuit operator
\begin{align*}
P \,=\, s_1 \partial_{c_2}\partial_{c_3} - s_2\partial_{c_1}\partial_{c_2}+4s_3\partial_{c_1}\partial_{c_2}  -(c_1-c_2+4c_3)\partial_{c_1}\partial_{c_2}\partial_{c_3} \, ,
\end{align*}
no syzygy operator, and one operator $L_i$ for each of the three lines. Together with the operator $H$ from~\eqref{eq:homogeneous}, they generate a holonomic $D_3$-ideal of holonomic rank~$6$. Again, the singular locus of this \mbox{$D_3$-ideal} coincides with the discriminantal arrangement of $\mathcal{A}_3$ enhanced by the coordinate axes.
Its factors are
\begin{align*}
    c_1,\ c_2,\ c_3,\ 3c_1+5c_2,\ 7c_1-3c_2,\ 2c_1+5c_3 \, ,\\ c_1-3c_3,\ 2c_2-3c_3, \ c_2-7c_3, \ c_1-c_2+4c_3 \, .
\end{align*}

\subsection{An example from cosmology}\label{sec:cosmology}
The cosmological two-site chain encodes a single exchange process and gives rise to three lines in the plane, namely $\ell_1=x+y$, $\ell_2=x$, and $\ell_3=y$, cf.~\cite[(1.7)]{DEcosmological}. 
The correlator function~is 
\begin{align*}
\phi(c)  \,=\, \int_\Gamma (\ell_1-c_1)^{s_1}(\ell_2-c_2)^{s_2}(\ell_3-c_3)^{s_3}x^{\nu_1}y^{\nu_2} \, \frac{\d x}{x} \, \frac{\d y}{y}\, .
\end{align*}
The only differential operator arising from the circuits of $\mathcal{A}_{2\text{-site}}=\left[\begin{smallmatrix}
        1 & 1 & 0\\
        1 & 0 & 1
    \end{smallmatrix}\right]$ is 
    \begin{align*}
    P \,=\, (c_1 - c_2 - c_3) \partial_{c_1} \partial_{c_2} \partial_{c_3} + (-s_1 \partial_{c_2} \partial_{c_3}  + s_2 \partial_{c_1} \partial_{c_3}  +s_3 \partial_{c_1} \partial_{c_2} ) \,.
    \end{align*}

\noindent From the lines themselves, we read three differential operators of order $3$ in~$\Ann(\phi)$:
 \begin{align*}
   L_1 & \,=\ -\frac{1}{s_1(\nu_1-1)(\nu_2-1)}c_1\partial_{c_1}^3-\frac{1}{s_1(\nu_1-1)(\nu_2-1)}c_1\partial_{c_1}^2\partial_{c_2}  \\
   & \quad -\frac{1}{s_1(\nu_1-1)(\nu_2-1)}c_1\partial_{c_1}^2\partial_{c_3} -\frac{1}{s_1(\nu_1-1)(\nu_2-1)}c_1\partial_{c_1}\partial_{c_2}\partial_{c_3} +\\
   & \quad +\frac{s_1+\nu_1+\nu_2-2}{s_1(\nu_1-1)(\nu_2-1)}\partial_{c_1}^2+\frac{s_1+\nu_2-1}{s_1(\nu_1-1)(\nu_2-1)}\partial_{c_1}\partial_{c_2}+ \\
   &\quad + \frac{s_1+\nu_1-1}{s_1(\nu_1-1)(\nu_2-1)}\partial_{c_1}\partial_{c_3}+\frac{1}{(\nu_1-1)(\nu_2-1)}\partial_{c_2}\partial_{c_3} \, , \\
   L_2 & \,=\, -\frac{1}{s_2(\nu_1-1)}c_2\partial_{c_1}\partial_{c_2}-\frac{1}{s_2(\nu_1-1)}c_2\partial_{c_2}^2+\frac{1}{\nu_1-1}\partial_{c_1}+\frac{s_2+\nu_1-1}{s_2(\nu_1-1)}\partial_{c_2} \, ,\\
   L_3 & \,=\, -\frac{1}{s_3(\nu_2-1)}c_3\partial_{c_1}\partial_{c_3}-\frac{1}{s_3(\nu_2-1)}c_3\partial_{c_3}^2+\frac{1}{\nu_2-1}\partial_{c_1}+\frac{s_3+\nu_2-1}{s_3(\nu_2-1)}\partial_{c_3} \, .
 \end{align*}
Together with the circuit operator and the operator~$H$, they generate a holonomic $D_3$-ideal of holonomic rank~$4$ whose singular locus is
\begin{align*}
V\left(c_1c_2c_3(c_1 - c_2)(c_1 - c_3)(c_1 - c_2 - c_3)\right) \, \subset \, \CC^3 \,.
\end{align*}
It coincides exactly with the discriminantal arrangement of the central arrangement $\{\ell_1,\ell_2,\ell_3\}$ enhanced by the coordinate axes. 

It turns out that in this example, our combinatorially constructed $D$-ideal coincides with a certain integration $D$-ideal, see \cite[(5.8)]{SST00} for the definition. For that, observe that the following operators annihilate the integrand of~$\phi$:
 \begin{align*}
(\ell_1-c_1)(\ell_2-c_2)x\partial_x-s_1(\ell_2-c_2)x-s_2(\ell_1-c_1)x-(\nu_1-1)(\ell_1-c_1)(\ell_2-c_2) \, , \\ (\ell_1-c_1)(\ell_3-c_3)y\partial_y-s_1(\ell_3-c_3)y-s_3(\ell_1-c_1)y-(\nu_2-1)(\ell_1-c_1)(\ell_3-c_3)\, ,\\ (\ell_1-c_1)\partial_{c_1}+s_1 \, ,\ (\ell_2-c_3)\partial_{c_2}+s_2\, ,\ (\ell_3-c_3)\partial_{c_3}+s_3 \, . \  \qquad \qquad \quad
\end{align*}
Denote by $J$ the $D$-ideal generated by them, and denote by $\pi_c$ the projection onto $\CC^3$ in the $c$-variables.\
Using {\em Macaulay2}, one computationally confirms that for random $s$ and $\nu$'s, the direct image $D$-module ${\pi_c}_+ (D/J)$ is concentrated in degree $0$ and is of the form $D/N$, with $N$ the $D$-ideal generated by
\begin{align*}
    N_1 & \,=\, c_1\partial_{c_1}+c_2\partial_{c_2}+c_3\partial_{c_3}-
    (\nu_1+\nu_2+s_1+s_2+s_3)\, , \\
 N_2 & \,=\,   c_3\partial_{c_1}\partial_{c_3} + c_3\partial_{c_3}^2 -s_3\partial_{c_1}-(s_3+\nu_2-1)\partial_{c_3} \, , \\
 N_3 & \,=\,  c_2\partial_{c_1}\partial_{c_2} + c_2\partial_{c_2}^2 -s_2\partial_{c_1} -(s_2+\nu_1-1)\partial_{c_2}  \, .
 \end{align*}
The holonomic rank of $N$ is $4$, and it coincides with our $D$-ideal.

In the cosmological setup, one sets $s_1=s_2=s_3=-1$, and $\nu_1=\nu_2\eqqcolon \varepsilon$ is related to the expansion rate of the universe. 
Explicitly, our operators then become
\begin{align*}
H & \,=\, c_1\partial_{c_1}+c_2\partial_{c_2}+c_3\partial_{c_3}-(2\varepsilon - 3) \, , \\
    P & \,=\, (c_1 - c_2 - c_3) \partial_{c_1} \partial_{c_2} \partial_{c_3} + ( \partial_{c_2} \partial_{c_3}  -\partial_{c_1} \partial_{c_3} -\partial_{c_1} \partial_{c_2} ) \, ,\\
   L_1 & \,=\ \frac{1}{(\varepsilon-1)^2} \cdot \big[ c_1\partial_{c_1}^3+c_1\partial_{c_1}^2\partial_{c_2}   +c_1\partial_{c_1}^2\partial_{c_3} +c_1\partial_{c_1}\partial_{c_2}\partial_{c_3} \\
   & \quad -{(2\varepsilon-3)}\partial_{c_1}^2-(\varepsilon-2)\partial_{c_1}\partial_{c_2} - (\varepsilon-2)\partial_{c_1}\partial_{c_3}+\partial_{c_2}\partial_{c_3} \big] \, , \\
   L_2 & \,=\, \frac{1}{\varepsilon-1} \left( c_2\partial_{c_1}\partial_{c_2}+c_2\partial_{c_2}^2+\partial_{c_1}-(\varepsilon-2)\partial_{c_2}\right) \, ,\\
   L_3 & \,=\, \frac{1}{\varepsilon-1}\left( c_3\partial_{c_1}\partial_{c_3}+c_3\partial_{c_3}^2+\partial_{c_1}-(\varepsilon-2)\partial_{c_3} \right)\, ,
 \end{align*}
and (after substituting $\varepsilon\to \varepsilon+1$, to~match~notation) recover the restricted GKZ system for the cosmological correlator of the two-site chain as computed~in~\cite{DEcosmological}.
\begin{remark}[Double exchange process] 
For the three-site chain, one obtains six hyperplanes $\ell_1,\ldots,\ell_6$ in $\RR^3$, encoded by 
\begin{align*}
    \mathcal{A}_{3\text{-site}} \,=\, \begin{bmatrix}
         1 & 1 & 0 & 1 & 0 & 0\\
         1 & 1& 1 & 0 & 1 & 0\\
         1 & 0 &1 &0 &0  &1
    \end{bmatrix} \, ,
\end{align*}
see \cite[(2.29)]{DEcosmological}. The GKZ system of the associated generalized Euler integral $\int \prod_{i=1}^6 (\ell_i-c_i)x^\nu \frac{\d x}{x} $, when leaving the coefficients of all $\ell_i$ generic, has holonomic rank $30$. The $D$-ideal $\langle H,\{L_i\},\{P_C\}_{C\text{ a circuit}},\{Q_k\}\rangle$, 
for fixed coefficients determined by $\mathcal{A}_{3\text{-site}}$, has holonomic rank $30$ as well, which we computed in {\em Macaulay2} for randomly chosen values of $\nu$ and $s$. 
The integrand of the cosmological correlator function is $\prod_{i\neq 5} (\ell_i-c_i)^{-1}+\prod_{i\neq 6}(\ell_i-c_i)^{-1}$. Using our methods, one could, in principle, compute annihilating $D_5$-ideals for each of the summands separately, and then compute an annihilating $D_5$-ideal for their~sum.
\end{remark}

\subsection{Different representations of a uniform matroid}\label{sec:U23}
The example in this section shows that our $D$-ideal does not depend only on the matroid, but on the hyperplane arrangement itself.
Consider the two matrices
\begin{align*}
    \mathcal{A}_{2\text{-site}}\,=\, \begin{bmatrix}
        1 & 1 & 0\\
        1 & 0 & 1
    \end{bmatrix} \quad \text{and} \quad
      \mathcal{B}\,=\, \begin{bmatrix}
        1 & 1 & 0\\
        -1 & 0 &  1 
    \end{bmatrix}\, .
\end{align*}
They give rise to the same matroid, namely the uniform matroid $U_{2,3}$ of rank $2$ on $3$ elements. To relate the resulting $D_3$-ideals, we are going to use that
\begin{align}\label{eq:BA}
    \mathcal{B} \,=\, \operatorname{diag}(-1,1)\cdot \mathcal{A}_{2\text{-site}} \cdot \operatorname{diag}(-1,-1,1) \, .
\end{align}
The kernel of $\mathcal{B} $ is spanned by $[-1,1,-1]^\top$, which gives rise to the operator 
\begin{align*}
   P \,=\, (c_1-c_2+c_3)\partial_1\partial_2\partial_3+\left( -s_1\partial _{c_2}\partial_{c_3}+s_2\partial_{c_1}\partial_{c_3} - s_3 \partial_{c_1}\partial_{c_3} \right) \,.
\end{align*}
The three lines encoded by $\mathcal{B} $, i.e., $\ell_1=x$, $\ell_2=y$, $\ell_3=x-y$, induce the operators $L_1,L_2,L_3$.
We can pass from the operators for $\mathcal{A}_{2\text{-site}}$ to the operators for $\mathcal{B} $ as follows. First, for the operators coming from the lines, we can use the equality~\eqref{eq:BA}. This operation will transform the lines of $\mathcal{A}_{2\text{-site}}$ into the lines of $\mathcal{B} $, and hence the operators as well.
For the circuit, if we multiply the kernel $[1 -1 -1]^\top$ of $\mathcal{A}_{2\text{-site}}$ by the $3 \times 3$ matrix in~\eqref{eq:BA}, we get the kernel $[-1 \ 1 -1]^\top$ of $\mathcal{B} $, so we can deduce the circuit operators of $\mathcal{B} $ from the operators of $\mathcal{A}_{2\text{-site}}$.

Together, the operators $H,L_1,L_2,L_3$, and $P$ generate a holonomic $D_3$-ideal of holonomic rank~$4.$ Its singular locus is
\begin{align*}
    V\left( c_1c_2c_3(c_1 - c_2)(c_1 + c_3)(c_1 - c_2 + c_3) \right) \, \subset \, \CC^3 \, .
\end{align*}
Its factors again coincide precisely with the discriminantal arrangement of $\mathcal{B}$. 

\subsection{Moving five lines in the plane}\label{example5lines}
We revisit the three lines from \Cref{sec:threelines} and add two more lines via
\begin{align*}
\mathcal{A}_5 \,=\, 
    \begin{bmatrix}
3 & 7 & 1 & 1 & 3\\
5 & -3 & -2 & -1 & 1\\
    \end{bmatrix} \, .
\end{align*}
Any generic displacement of this line arrangement encloses $15$ bounded regions together with the coordinate axes.
The kernel of $\mathcal{A}_5$ is spanned by the~columns~of
\begin{align*}
  K \,= \,  
  \begin{bmatrix}
1 & 0 & 0\\
0 & 0 & 1\\
-12 & 4 & 16\\
24 & -7 & -32\\
-5 & 1 & 3\\
    \end{bmatrix} \, .
\end{align*} 
So, the operators
\begin{align*} \begin{split}
P_0  & \,=\,(-c_1 + 12c_3 - 24c_4 + 5c_5) {\partial_{c_{1}}\partial_{c_{2}}\partial_{c_{3}}\partial_{c_{4}}\partial_{c_{5}}}+\\
& \quad +(s_1 {\partial_{c_{2}}\partial_{c_{3}}\partial_{c_{4}}\partial_{c_{5}}} - 12s_3 {\partial_{c_{1}}\partial_{c_{2}}\partial_{c_{4}}\partial_{c_{5}}} + 24s_4 {\partial_{c_{1}}\partial_{c_{2}}\partial_{c_{3}}\partial_{c_{5}}} - 5s_5 {\partial_{c_{1}}\partial_{c_{2}}\partial_{c_{3}}\partial_{c_{4}}}) \, , \\
P_1 & \,=\, (- 4c_3 + 7c_4 - 1c_5) {\partial_{c_{1}}\partial_{c_{2}}\partial_{c_{3}}\partial_{c_{4}}\partial_{c_{5}}}+ \\
& \quad +(4s_3 {\partial_{c_{1}}\partial_{c_{2}}\partial_{c_{4}}\partial_{c_{5}}} -7s_4 {\partial_{c_{1}}\partial_{c_{2}}\partial_{c_{3}}\partial_{c_{5}}} + s_5 {\partial_{c_{1}}\partial_{c_{2}}\partial_{c_{3}}\partial_{c_{4}}}) \, ,\\
P_2  & \,=\, (- c_2 - 16c_3 + 32c_4 - 3c_5) {\partial_{c_{1}}\partial_{c_{2}}\partial_{c_{3}}\partial_{c_{4}}\partial_{c_{5}}} +\\
& \quad +(s_2 {\partial_{c_{1}}\partial_{c_{3}}\partial_{c_{4}}\partial_{c_{5}}} + 16s_3 {\partial_{c_{1}}\partial_{c_{2}}\partial_{c_{4}}\partial_{c_{5}}} -32s_4 {\partial_{c_{1}}\partial_{c_{2}}\partial_{c_{3}}\partial_{c_{5}}} + 3s_5 {\partial_{c_{1}}\partial_{c_{2}}\partial_{c_{3}}\partial_{c_{4}}}) \,
\end{split}\end{align*} 
annihilate $\phi$.
For computing the syzygies, we run the following code in {\em Macaulay2}: 
{\footnotesize 
\begin{verbatim}
k = 5; R = QQ[c_1..c_k, Degrees => {k:0}]; S = R[x,y]
A = matrix{{3,7,1,1,3},{5,-3,-2,-1,1}}
B = (-id_(QQ^k)) || A
L = (vars R | vars S) * B
basis(0, syz L) -- find solutions only in the c's
\end{verbatim}
} 
\noindent This provides a matrix 
\begin{align*}
S \,=\, 
\begin{bmatrix}
 0 & 4c_3-7c_4+c_5 & c_2-4c_4-c_5 \\
4c_3-7c_4+c_5 & 0 & -c_1-3c_4+2c_5 \\
-4c_2+16c_4+4c_5 & -4c_1-12c_4+8c_5 & 0 \\
7c_2-16c_3-11c_5 & 7c_1+12c_3-11c_5 & 4c_1+3c_2-11c_5 \\
-c_2-4c_3+11c_4 & -c_1-8c_3+11c_4 & c_1-2c_2+11c_4
\end{bmatrix}\, ,
\end{align*}  
from which we derive the following operators in $\Ann_{D_5}(\phi)$:
\begin{align*}
\begin{split}
Q_0  & \,=\,
((4c_3-7c_4+c_5)s_2 {\partial_{c_{1}}\partial_{c_{3}}\partial_{c_{4}}\partial_{c_{5}}} +(-4c_2+16c_4+4c_5)s_3 {\partial_{c_{1}}\partial_{c_{2}}\partial_{c_{4}}\partial_{c_{5}}}+\\
& \qquad + (7c_2-16c_3-11c_5)s_4 {\partial_{c_{1}}\partial_{c_{2}}\partial_{c_{3}}\partial_{c_{5}}} +(-c_2-4c_3+11c_4)s_5 {\partial_{c_{1}}\partial_{c_{2}}\partial_{c_{3}}\partial_{c_{4}}}) \, , \\
Q_1 & \,=\,
((4c_3-7c_4+c_5)s_1 \partial_{c_{2}}\partial_{c_{3}}\partial_{c_{4}}\partial_{c_{5}} +(-4c_1-12c_4+8c_5)s_3 \partial_{c_{1}}\partial_{c_{2}}\partial_{c_{4}}\partial_{c_{5}}+\\
& \qquad + (7c_1+12c_3-11c_5)s_4 \partial_{c_{1}}\partial_{c_{2}}\partial_{c_{3}}\partial_{c_{5}} +(-c_1-8c_3+11c_4)s_5 \partial_{c_{1}}\partial_{c_{2}}\partial_{c_{3}}\partial_{c_{4}}) \, , \\
Q_2 & \,=\,
((c_2-4c_4-c_5)s_1 \partial_{c_{2}}\partial_{c_{3}}\partial_{c_{4}}\partial_{c_{5}}+(-c_1-3c_4+2c_5)s_2 \partial_{c_{1}}\partial_{c_{3}}\partial_{c_{4}}\partial_{c_{5}} +\\
& \qquad + (4c_1+3c_2-11c_5)s_4 \partial_{c_{1}}\partial_{c_{2}}\partial_{c_{3}}\partial_{c_{5}} +(c_1-2c_2+11c_4)s_5 \partial_{c_{1}}\partial_{c_{2}}\partial_{c_{3}}\partial_{c_{4}}) \,.
\end{split} 
\end{align*}
We repeat the operations for any $\{i_1,...,i_k\} \subseteq [5]$ for $k=4,5$. For, $k=3$, \linebreak {\em Macaulay2} returns only the zero vector; hence, this does not contribute to any operator.
There are also the operators $L_1,\ldots,L_5$ constructed from the lines and the homogeneity operator~$H$.
We computed for several randomly chosen values of $s,\nu$ that, all together, they generate a $D_5$-ideal of holonomic rank~$15$. 
The computation of the singular locus did not terminate. 

\begin{remark}
We observed in all of our examples that the two $D$-ideals\linebreak
$\langle H,\{L_i\},\{P_j\},\{Q_k\}\rangle$ and $\langle H,\{L_i\},\{P_j\}\rangle$ coincide.
\end{remark}

\section{Outlook}\label{sec:outlook}
In this article, we tackled the combinatorial encryption of Mellin integrals of individual powers of hyperplanes as holonomic functions in the constant terms of the hyperplanes. 
Our presentation so far is a case study from which several interesting follow-up questions and paths for future research arise. We provided a combinatorial construction of an annihilating $D$-ideal for the combinatorial correlator function. Our examples of line arrangements in the plane suggest that they might compute restrictions of GKZ systems. 

Since our annihilating $D$-ideal does not depend on the matroid of the hyperplane arrangement only, one should check if and how our combinatorially obtained $D$-ideal can be extended by utilizing the logarithmic derivation module of the arrangement. We also plan to investigate to what extent the observed interplay of the singular locus of our \mbox{$D$-ideal} and the discriminantal arrangement of the central arrangement, and that of the holonomic rank of our \mbox{$D$-ideal} and the $\beta$-invariant of the matroid hold true. 

Functions of a highly similar structure are Igusa zeta functions, for which one has evaluation formulae in the $p$-adic case~\cite{MaglioneVoll}. It would be worthwhile to check if these formulae can be generalized beyond the $p$-adic~case.

\bigskip 

\noindent {\bf Acknowledgements.}
We thank Johannes Henn, Martina Juhnke, Lukas Kühne, Saiei-Jaeyeong Matsubara-Heo, Mahrud Sayrafi,
Bernd Sturmfels, Simon Telen, Francisco Vazão, and Christopher Voll for insightful discussions, and Felix Lotter, Guilherme L.\ Pimentel, and Dawei Shen for their helpful comments on an early version of our manuscript.

\medskip
\noindent {\bf Funding statement.}
The research of ALS is funded by the European Union (ERC, UNIVERSE PLUS, 101118787). Views and opinions expressed are, however, those of the author(s) only and do not necessarily reflect those of the European Union or the European Research Council Executive Agency. Neither the European Union nor the granting authority can be held responsible for them.

\vspace*{-6mm}

\begin{small}

\end{small}

\end{document}